\DeclareMathOperator{\id}{id}
\DeclareMathOperator{\ev}{ev}
\DeclareMathOperator{\im}{im}
\DeclareMathOperator{\Cov}{Cov}
\DeclareMathOperator{\gen}{gen}
\DeclareMathOperator{\dist}{dist}
\newcommand{\pt}{\text{pt}}
\newcommand{\cone}[1]{\mathcal{C}#1}
\newcommand{\approxARef}[1]{($\mathfrak{A}$#1)}
\newcommand{\myRef}[1]{(\ref{#1})}
\newcommand{\KK}{{\mathbb{K}}}
\newcommand{\NN}{{\mathbb{N}}}
\newcommand{\CC}{{\mathbb{C}}}
\newcommand{\otimesMax}{\otimes_\text{max}}
\newcommand{\ca}{{\mbox{$C^*$-al}\-ge\-bra}}
\newcommand{\ssubset}{\subset\!\!\subset}
\newtheorem{lma}{Lemma}[section]
\newaliascnt{thmCt}{lma}
\newtheorem{thm}[thmCt]{Theorem}
\newaliascnt{corCt}{lma}
\newtheorem{cor}[corCt]{Corollary}
\newaliascnt{prpCt}{lma}
\newtheorem{prp}[prpCt]{Proposition}
\theoremstyle{definition}
\newaliascnt{pgrCt}{lma}
\newtheorem{pgr}[pgrCt]{}
\newaliascnt{dfnCt}{lma}
\newtheorem{dfn}[dfnCt]{Definition}
\newaliascnt{rmkCt}{lma}
\newtheorem{rmk}[rmkCt]{Remark}
\newaliascnt{rmksCt}{lma}
\newaliascnt{exaCt}{lma}
\newtheorem{exa}[exaCt]{Example}
\newaliascnt{qstCt}{lma}
\newtheorem{qst}[qstCt]{Question}
\title[Limits of projectives]{Inductive limits of projective $C^*$-algebras}%
\author{Hannes Thiel}
\address{Hannes Thiel
Mathematisches Institut, Fachbereich Mathematik und Informatik der
Universit\"at M\"unster, Einsteinstrasse 62, 48149 M\"unster, Germany.}
\email{hannes.thiel@uni-muenster.de}
\urladdr{www.math.uni-muenster.de/u/hannes.thiel/}
\thanks{
The author was partially supported by the Marie Curie Research Training Network EU-NCG, by the Danish National Research Foundation through the Centre for Symmetry and Deformation, and by the Deutsche Forschungsgemeinschaft (SFB 878 Groups, Geometry \& Actions).}
\subjclass[2010]%
{Primary
46L05, % General theory of C*-algebras
46M10; % Projective / injective objects in categories of topol. linear spaces
Secondary
46L85, % Noncommutative topology
46M20, % Methods of algebraic topology in functional analysis
54C55, % Absolute neighborhood retracts and extensors
54C56, % Shape theory
55M15, % Absolute neighborhood retracts
55P55% Shape theory
}
\date{\today}
\begin{document}

%==========================================================================================
\begin{abstract}
We show that a separable \ca{} is an inductive limits of projective \ca{s} if and only if it has trivial shape, that is, if it is shape equivalent to the zero \ca{}.
In particular, every contractible \ca{} is an inductive limit of projectives, and one may assume that the connecting morphisms are surjective.
Interestingly, an example of Dadarlat shows that trivial shape does not pass to full hereditary sub-\ca{s}.
It then follows that the same fails for projectivity.

To obtain these results, we develop criteria for inductive limit decompositions, and we discuss the relation with different concepts of approximation.

As a main application of our findings we show that a \ca{} is (weakly) projective if and only if it is (weakly) semiprojective and has trivial shape.
It follows that a \ca{} is projective if and only if it is contractible and semiprojective.
This confirms a conjecture of Loring.
\end{abstract}

\maketitle

%==========================================================================================
\section{Introduction}

%==========================================================================================
Shape theory and homotopy theory are tools to study global properties of spaces.
However, homotopy theory gives useful results mainly for spaces with good local behavior (that is, without singularities).
For such well-behaved spaces both theories agree, and one usually employs homotopy theory which is easier to compute.
To study more general spaces with possible singularities, one uses shape theory.
The idea is to abstract from the local behavior of a space, and focus on its global behavior, its `shape'.

One way of doing this, is to approximate a space by nicer spaces, the building blocks.
In the commutative world the building blocks are the so-called absolute neighborhood retracts, abbreviated by ANR.
The approximation is organized in an inverse limit structure, and instead of looking at the original space one studies an associated inverse system of ANRs.

After shape theory was successfully used to study spaces, it was introduced to the study of noncommutative spaces (that is, \ca{s}) by Effros and Kaminker, \cite{EffKam86ShapeThy}, and shortly after developed to its modern form by Blackadar, \cite{Bla85ShapeThy}.
Shape theory works best when restricted to metrizable spaces, and similarly for noncommutative shape theory one restricts attention to separable \ca{s}.

The building blocks of noncommutative shape theory are the semiprojective \ca{s}, which are defined in analogy to ANRs.
Since the category of commutative \ca{s} is dual to the category of spaces, the approximation by an inverse system for spaces is turned into an approximation by an inductive system for \ca{s}.
Then, approximating a \ca{} by `nice' \ca{s} means to write it as an inductive limit of semiprojective \ca{s}.

This raises the natural question of whether there are enough building blocks to approximate every space.
This is true in the commutative world, as every metric space is an inverse limit of ANRs.
The analog for \ca{s} is still an open problem, first asked by Blackadar:

%==========================================================================================
\begin{qst}[{Blackadar, \cite[4.4]{Bla85ShapeThy}}]
\label{quest:S01:SSS}
Is every separable \ca{} an inductive limit of semiprojective \ca{s}?
\end{qst}

%==========================================================================================
In this paper we study the related question of which \ca{s} are inductive limits of projective \ca{s}.
A necessary condition is that such a \ca{} has trivial shape, that is, it is shape equivalent to the zero \ca{}, since this holds for projective \ca{s} and is preserved by inductive limits.
We will show that the converse is also true, that is, a separable \ca{} is an inductive limit of projective \ca{s} if and only if it has trivial shape;
see \autoref{prop:S04:TFAE-trivSh}.
This also gives a positive answer to \autoref{quest:S01:SSS} for \ca{s} with trivial shape, a class which is quite large since it contains for instance all contractible \ca{s}.

This paper is organized as follows.
In \autoref{sect:S02:preliminaries} we recall the basic definitions.
Then, in \autoref{sect:S03:approximation} we discuss different concepts of how a \ca{} can be `approximated' by other \ca{s}, for instance as an inductive limit.
If $\mathcal{C}$ is a class of \ca{s}, then an inductive limit of algebras in $\mathcal{C}$ is called an $A\mathcal{C}$-algebra.
We suggest to use the formulation that $A$ is `$\mathcal{C}$-like' if it can be approximated by sub-\ca{s} from $\mathcal{C}$, see \autoref{defn:S03:P-like} and \autoref{prop:S03:P_like_is_right_notion}.

Building on a one-sided approximate intertwining argument, due to Elliott in \cite[2.1, 2.3]{Ell93ClassRR0}, see \autoref{prop:S03:Intertwining}, we give two criteria to show that a given \ca{} is an $A\mathcal{C}$-algebra.
We assume that the class $\mathcal{C}$ of building blocks consists of weakly semiprojective \ca{s}.
Then every separable $A\mathcal{C}$-like \ca{} is already an $A\mathcal{C}$-algebra, see \autoref{prop:S03:AC-like_implies_AC}, and every $AA\mathcal{C}$-algebra is already an $A\mathcal{C}$-algebra, see \autoref{prop:S03:AAC_implies_AC}.

In \autoref{sect:S04:trivial_shape} we study the class of \ca{s} with trivial shape.
We show that these are exactly the \ca{s} that are inductive limits of projective \ca{s}, see \autoref{prop:S04:TFAE-trivSh}.
Moreover, one may assume that the connecting morphisms are surjective, since we show in \autoref{prop:S04:change_connecting_mor_surj} that every inductive system can be changed so that the connecting morphisms become surjective while the limit is unchanged.

As a corollary, we obtain that every separable, contractible \ca{} is an inductive limit of projective \ca{s}, see \autoref{prop:S04:contr_AP}.
We discuss permanence properties of trivial shape, see \autoref{prop:S04:Permanence_trivSh}.
It follows from an example of Dadarlat that trivial shape does not pass to full hereditary sub-\ca{s}, see \autoref{pargr:S04:exmpl_Dadarlat}.
We deduce that also projectivity does not pass to full hereditary sub-\ca{s}, see \autoref{prop:S04:Projectivity_not_hereditary}.

In \autoref{sect:S05:relations_classes} we show some non-commutative analogs of results in commutative shape theory.
We prove that a separable \ca{} is (weakly) projective if and only if is is  (weakly) semiprojective and has trivial shape, see \autoref{prop:S05:wP_iff_wSP_trivSh}.
It follows that a separable \ca{} is projective if and only if it is is semiprojective and contractible, see \autoref{prop:S05:P_iff_SP-C}.
This confirms a conjecture of Loring.

%==========================================================================================
%==========================================================================================
\section{Preliminaries}
\label{sect:S02:preliminaries}

%==========================================================================================
By a morphism between \ca{s} we mean a ${}^\ast$-homomorphism.
Given a morphism $\varphi$, we use $\im(\varphi)$ to denote its image, and we use $\ker(\varphi)$ to denote its kernel.
Ideals are understood to be closed and two-sided ideals.
We use the symbol `$\simeq$' to denote homotopy equivalence, both for objects and morphisms.

We use the following notations.
For $\varepsilon>0$, a subset $F$ of a \ca{} $A$ is said to be \emph{$\varepsilon$-contained} in another subset $G$, denoted by $F\subseteq_\varepsilon G$, if for every $x\in F$ there exists some $y\in G$ such that $\|x-y\|<\varepsilon$.
Given two morphisms $\varphi,\psi\colon A\to B$ between \ca{s} and a subset $F\subseteq A$ we say that \emph{$\varphi$ and $\psi$ agree on $F$}, denoted $\varphi=^F\psi$, if $\varphi(x)=\psi(x)$ for all $x\in F$.
If, moreover, $\varepsilon>0$ is given, then we say that \emph{$\varphi$ and $\psi$ agree on $F$ up to $\varepsilon$}, denoted $\varphi=_\varepsilon^F\psi$, if $\|\varphi(x)-\psi(x)\|<\varepsilon$ for all $x\in F$.
We write $F\ssubset A$ to denote that $F$ is a finite subset of $A$.

%==========================================================================================
\begin{pgr}
\label{pargr:S02:wSP}
We consider shape theory for separable \ca{} in the sense of Blackadar, see \cite{Bla85ShapeThy}.
In this paragraph, which is a shortened version of \cite[2.2]{SoeThi12CharCommutSP}, we recall the main notions.

A morphism $\varphi\colon A\to B$ is called \emph{(weakly) projective} if whenever $C$ is a \ca{}, $J\lhd A$ is an ideal and $\sigma\colon B\to C/J$ is a morphism (and $\varepsilon>0$ and $F\ssubset A$), there exists a morphism $\psi\colon A\to C$ such that $\pi\circ\psi=\sigma\circ\varphi$ (resp. $\pi\circ\psi=_\varepsilon^F\sigma\circ\varphi$), where $\pi\colon C\to C/J$ is the quotient morphism.
This means that the left diagram below can be completed to commute (up to $\varepsilon$ on $F$).

A \ca{} $A$ is called \emph{(weakly) projective} if the identity morphism $\id_A\colon A\to A$ is (weakly) projective.

\begin{figure}[h]
\centering
\begin{minipage}{.5\textwidth}
\centering
\makebox{
\xymatrix{
& & C \ar[d]^{\pi} \\
A \ar[r]_{\varphi} \ar@{..>}[urr]^{\psi} & B \ar[r]_{\sigma}& C/J
}}
\end{minipage}%
\begin{minipage}{.5\textwidth}
\centering
\makebox{
\xymatrix{
& & C \ar[d] \\
& & C/J_k \ar[d]^{\pi_k} \\
A \ar[r]_{\varphi} \ar@{..>}[urr]^{\psi} & B \ar[r]_{\sigma}  & C/\overline{\bigcup_k J_k}
}}
\end{minipage}
\end{figure}

A morphism $\varphi\colon A\to B$ is called \emph{(weakly) semiprojective} if whenever $C$ is a \ca{}, $J_1\subseteq J_2\subseteq\cdots$ is an increasing sequence of ideals of $C$ and $\sigma\colon B\to C/\overline{\bigcup_k J_k}$ is a morphism (and $\varepsilon>0$ and $F\ssubset A$), there exist an index $k$ and a morphism $\psi\colon A\to C/J_k$ such that $\pi_k\circ\psi=\sigma\circ\varphi$ (resp. $\pi_k\circ\psi=_\varepsilon^F\sigma\circ\varphi$), where $\pi_k\colon C/J_k\to C/\overline{\bigcup_k J_k}$ is the quotient morphism.
This means that the right diagram above can be completed to commute (up to $\varepsilon$ on $F$).

A \ca{} $A$ is said to be \emph{(weakly) semiprojective} if the identity morphism $\id_A\colon A\to A$ is (weakly) semiprojective.
\end{pgr}

%==========================================================================================
\begin{pgr}
\label{pargr:S02:shape_system}
A (sequential) \emph{inductive system} of \ca{s} is a collection $\mathcal{A}=(A_k,\gamma_k)$ of \ca{s} $A_1,A_2,\ldots$ together with morphisms $\gamma_k\colon A_k\to A_{k+1}$ for each $k$.
If $k<l$, then we let $\gamma_{l,k}:=\gamma_{l-1}\circ\cdots\circ\gamma_{k+1}\circ\gamma_k\colon A_k\to A_l$ denote the composition of connecting morphisms.
By $\varinjlim\mathcal{A}$ or $\varinjlim A_k$ we denote the inductive limit of an inductive system, and by $\gamma_{\infty,k}\colon A_k\to\varinjlim A_k$ we denote the canonical morphism into the inductive limit.

Let $A$ be a separable \ca{}.
A \emph{shape system} for $A$ is an inductive system $(A_k,\gamma_k)$ such that $A\cong\varinjlim A_k$ and such that the connecting morphisms $\gamma_k\colon A_k\to A_{k+1}$ are semiprojective.
By \cite[Theorem~4.3]{Bla85ShapeThy}, every separable \ca{} has a shape system consisting of finitely generated \ca{s}.

Two inductive systems $\mathcal{A}=(A_k,\gamma_k)$ and $\mathcal{B}=(B_n,\theta_n)$ are called \emph{(shape) equivalent}, denoted $\mathcal{A}\sim\mathcal{B}$, if there exist an increasing sequences of indices $k_1<n_1<k_2<n_2<\ldots$ and morphisms $\alpha_i\colon A_{k_i}\to B_{n_i}$ and $\beta_i\colon B_{n_i}\to A_{k_{i+1}}$ such that $\beta_i\circ\alpha_i\simeq\gamma_{k_{i+1},k_i}$ and $\alpha_{i+1}\circ\beta_i\simeq\theta_{n_{i+1},n_i}$ for all $i$.
The situation is shown in the following diagram which commutes up to homotopy.
\begin{center}
\makebox{
\xymatrix{
A_{k_1} \ar[rr]^{\gamma_{k_2,k_1}} \ar[dr]_{\alpha_1}
    & & A_{k_2} \ar[rr]^{\gamma_{k_3,k_2}} \ar[dr]_{\alpha_2}
    & & A_{k_3} \ar[rr] \ar[dr]_{\alpha_3}
    & & \ldots \ar[r]
    & A
    \\
& B_{n_1} \ar[rr]_{\theta_{n_2,n_1}} \ar[ur]^{\beta_1}
    & & B_{n_2} \ar[rr]_{\theta_{n_3,n_2}} \ar[ur]^{\beta_2}
    & & B_{n_3} \ar[r]
    & \ldots \ar[r]
    &  B
\\
}}
\end{center}

If we have $\alpha_i,\beta_i$ as above with only $\beta_i\circ\alpha_i\simeq\gamma_{k_{i+1},k_i}$ for all $i$, then we say that $\mathcal{A}$ is \emph{(shape) dominated} by $\mathcal{B}$, denoted $\mathcal{A}\precsim\mathcal{B}$.
Of course $\mathcal{A}\sim\mathcal{B}$ implies $\mathcal{A}\precsim\mathcal{B}$ and $\mathcal{B}\precsim\mathcal{A}$, but the converse is false.
Nevertheless $\sim$ is an equivalence relation, and $\precsim$ is transitive.

Any two shape systems of a \ca{} are equivalent.
Given two \ca{s} $A$ and $B$ we say that $A$ is \emph{shape equivalent} to $B$, denoted $A\sim_{Sh}B$, if they have some shape systems that are equivalent.
We say $A$ is \emph{shape dominated} by $B$, denoted $A\precsim_{Sh}B$, if some shape system of $A$ is dominated by some shape system of $B$.

Shape is coarser than homotopy in the following sense: If $A$ and $B$ are homotopy equivalent (denoted $A\simeq B$), then $A\sim_{Sh}B$.
Moreover, if $A$ is homotopy dominated by $B$, then $A\precsim_{Sh}B$.
\end{pgr}

%==========================================================================================
\begin{thm}[{Effros, Kaminker, \cite[3.2]{EffKam86ShapeThy}, also Blackadar, \cite[Theorem~3.1, 3.3]{Bla85ShapeThy}}]
\label{prop:S02:SP_lifting_limits}
Let $\varphi\colon A\to B$ be a semiprojective morphism, and let $(C_k,\gamma_k)$ be an inductive system with limit $C$.
Then:
\begin{enumerate}
\item
Let $\sigma\colon B\to C$ be a morphism.
Then for $k$ large enough there exist morphisms $\psi_k\colon A\to C_k$ such that $\gamma_{\infty, k}\circ\psi_k\simeq\sigma\circ\varphi$ and such that $\gamma_{\infty,k}\circ\psi_k$ converges pointwise to $\sigma\circ\varphi$.
This means that the left diagram below can be completed to commute up to homotopy.
\item
Let $\sigma_1,\sigma_2\colon B\to C_k$ be two morphisms with $\gamma_{\infty,k}\circ\sigma_1\simeq\gamma_{\infty,k}\circ\sigma_2$.
Then for $n\geq k$ large enough, already the morphisms $\gamma_{n,k}\circ\sigma_1\circ\varphi$ and $\gamma_{n,k}\circ\sigma_2\circ\varphi$ are homotopic.
The situation is shown in the right diagram below.
\end{enumerate}
\begin{center}
%\[
\makebox{
\xymatrix{
A \ar[r]^{\varphi} \ar@{..>}[d]_{\psi_k}
& B \ar[d]^{\sigma}
& &
A \ar[r]^{\varphi}
& B \ar@<2pt>[dl]^{\sigma_1} \ar@<-2pt>[dl]_{\sigma_2}
\\
C_k \ar[r]_{\gamma_{\infty, k}} & C
& &
C_k \ar[r]_{\gamma_{n,k}}
& C_n \ar[r]
& C
}}
\end{center}
\end{thm}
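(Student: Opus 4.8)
\noindent
The common strategy for both parts is to encode the relevant data as a quotient of a single $C^*$-algebra by an \emph{increasing} chain of ideals, and then to feed this into the defining lifting property of a semiprojective morphism recorded in \ref{pargr:S02:wSP}. I treat (1) by a telescope construction and then reduce (2) to (1). For (1), the plan is to form the \emph{mapping telescope} $T$ of the system $(C_k,\gamma_k)$: the $C^*$-algebra of bounded continuous functions $f$ on $[1,\infty)$ that on each $[k,k+1]$ take values in $C_k$, are matched at the integers via the $\gamma_k$, and are such that the path $t\mapsto\gamma_{\infty,k}(f(t))$ in $C$ (with $t\in[k,k+1]$) converges as $t\to\infty$. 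One checks $T$ is a $C^*$-algebra, the only non-formal point being that the convergence requirement is a closed condition, which holds because it passes to uniform limits. Let $J_n\lhd T$ be the ideal of functions vanishing on $[n,\infty)$; these increase, $\overline{\bigcup_nJ_n}$ is the ideal of $f$ with $\|f(t)\|\to0$, and $T/J_n$ is the analogous ``tail telescope'' over $[n,\infty)$, which carries evaluation morphisms $\ev_m\colon T/J_n\to C_m$ for integers $m\ge n$. Finally, $T/\overline{\bigcup_nJ_n}$ receives $C$ as a retract: the constant-on-segments functions $g$ with $g|_{[j,j+1]}\equiv\gamma_{j,k}(c)$ (for $c\in C_k$, $j\ge k$) are compatible with the connecting maps and define a morphism $\bar s\colon C\to T/\overline{\bigcup_nJ_n}$, split by the morphism $r$ sending the class of $f$ to $\lim_t\gamma_{\infty,k}(f(t))$, so $r\circ\bar s=\id_C$.

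Now apply the definition in \ref{pargr:S02:wSP} for the semiprojective morphism $\varphi$ to the chain $J_1\lhd J_2\lhd\cdots\lhd T$ and the morphism $\bar s\circ\sigma\colon B\to T/\overline{\bigcup_nJ_n}$. Since $\varphi$ is semiprojective (not merely weakly so), this yields an index $n_0$ and a morphism $\bar\psi\colon A\to T/J_{n_0}$ with $\pi_{n_0}\circ\bar\psi=\bar s\circ\sigma\circ\varphi$, where $\pi_{n_0}\colon T/J_{n_0}\to T/\overline{\bigcup_nJ_n}$ is the quotient map. Put $\psi_k:=\ev_k\circ\bar\psi\colon A\to C_k$ for integers $k\ge n_0$. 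The two asserted properties then come from ``sliding the evaluation point to infinity'': the morphisms $\gamma_{\infty,k}\circ\ev_k$ and $r\circ\pi_{n_0}$ from $T/J_{n_0}$ to $C$ are homotopic via the homotopy that moves the evaluation point continuously from $k$ to $\infty$, and $\gamma_{\infty,k}\bigl(\ev_k(h)\bigr)\to r\bigl(\pi_{n_0}(h)\bigr)$ as $k\to\infty$ for each fixed $h\in T/J_{n_0}$. Applying this with $h=\bar\psi(a)$ and using $r\circ\pi_{n_0}\circ\bar\psi=r\circ\bar s\circ\sigma\circ\varphi=\sigma\circ\varphi$ gives $\gamma_{\infty,k}\circ\psi_k\simeq\sigma\circ\varphi$ for every $k\ge n_0$ and $\gamma_{\infty,k}\circ\psi_k\to\sigma\circ\varphi$ pointwise, as wanted.

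For (2), the plan is to reduce to (1). Given $\sigma_1,\sigma_2\colon B\to C_k$ with $\gamma_{\infty,k}\circ\sigma_1\simeq\gamma_{\infty,k}\circ\sigma_2$, fix such a homotopy, i.e.\ a morphism $H\colon B\to C([0,1],C)$ with $\ev_0\circ H=\gamma_{\infty,k}\circ\sigma_1$ and $\ev_1\circ H=\gamma_{\infty,k}\circ\sigma_2$. For $m\ge k$ set
\[
 W_m:=\bigl\{\,(c,c',g)\in C_k\oplus C_k\oplus C([0,1],C_m)\ \big|\ g(0)=\gamma_{m,k}(c),\ g(1)=\gamma_{m,k}(c')\,\bigr\},
\]
with connecting maps acting by $\gamma_m$ on the third coordinate and by the identity on the first two. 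Then $\varinjlim_mW_m$ is the analogous algebra $W_\infty$ with $C$ in place of $C_m$ and $\gamma_{\infty,k}$ in place of $\gamma_{m,k}$, and $\Sigma:=(\sigma_1,\sigma_2,H)$ is a morphism $B\to W_\infty$. Apply (1) to $\varphi$, the system $(W_m)_{m\ge k}$, and $\Sigma$: for some large $m=:n$ there is $\Psi=(\Psi^1,\Psi^2,\Psi^3)\colon A\to W_n$ with $\gamma^W_{\infty,n}\circ\Psi\simeq\Sigma\circ\varphi$. The constraints $g(0)=\gamma_{n,k}(c)$, $g(1)=\gamma_{n,k}(c')$ make $\Psi^3$ a homotopy in $C_n$ from $\gamma_{n,k}\circ\Psi^1$ to $\gamma_{n,k}\circ\Psi^2$; and since $\gamma^W_{\infty,n}$ is the identity on the first two coordinates, composing $\gamma^W_{\infty,n}\circ\Psi\simeq\Sigma\circ\varphi$ with the first and second coordinate projections $W_\infty\to C_k$ gives $\Psi^1\simeq\sigma_1\circ\varphi$ and $\Psi^2\simeq\sigma_2\circ\varphi$. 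Applying $\gamma_{n,k}$ and concatenating,
\[
 \gamma_{n,k}\circ\sigma_1\circ\varphi\ \simeq\ \gamma_{n,k}\circ\Psi^1\ \simeq\ \gamma_{n,k}\circ\Psi^2\ \simeq\ \gamma_{n,k}\circ\sigma_2\circ\varphi ,
\]
which is the claim.

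I expect the main obstacle to lie entirely in (1): the construction of the telescope $T$ and the verification of its properties --- in particular that $\overline{\bigcup_nJ_n}$ is precisely the kernel of $r$, that $\bar s$ is well defined on all of $C=\varinjlim C_k$, and that the ``slide the evaluation point'' homotopies are legitimate homotopies of $\ast$-homomorphisms. The fact that the maps $\gamma_{\infty,k}$ may have large kernels is exactly what obstructs a naive attempt using a product of the $C_k$ and forces the detour through $T$. (If one insists on the convention that all $C^*$-algebras be separable, separability of $\varphi$, $A$, $B$ lets one replace $T$, resp.\ $W_\infty$, by a separable sub-$C^*$-algebra through which the data factors; and in (2) one uses that $C([0,1],-)$ and the endpoint constraints commute with inductive limits.)
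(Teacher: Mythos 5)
The paper offers no proof of this statement---it is quoted from Blackadar and Effros--Kaminker with only the remark that the argument uses Brown's mapping telescope construction---and your proposal is a correct reconstruction of exactly that argument: the telescope $T$ with its increasing chain of ideals $J_n$ for part (1), and the reduction of (2) to (1) via path algebras with endpoint constraints. Two small corrections to your list of expected obstacles: first, $\overline{\bigcup_n J_n}$ is in general \emph{strictly} smaller than $\ker r$ (take all $\gamma_k=0$ and a function that is a unit bump on each segment, vanishing at the integers), but your argument never needs equality, only the inclusion $\overline{\bigcup_n J_n}\subseteq\ker r$ that makes $r$ descend to the quotient; second, when the $\gamma_k$ are not surjective, $T/J_n$ is only a subalgebra of the tail telescope (the value at $n$ must lie in $\im(\gamma_{n-1})$), which is again harmless since you only use the evaluation morphisms, the quotient map $\pi_{n_0}$, and the slide-to-infinity homotopy on it. The separability issue you flag at the end is real---$T$ need not be separable---and your proposed fix (cut down to a separable subalgebra through which the data factors, or invoke the standard fact that semiprojectivity against separable test algebras implies it against arbitrary ones) is the usual one.
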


%==========================================================================================
\begin{rmk}
Let $(C_k,\gamma_k)$ be an inductive system with limit $C$.
We use $[A,C_k]$ to denote the set of homotopy classes of morphisms from $A$ to $C_k$.
The connecting morphism $\gamma_k\colon C_k\to C_{k+1}$ induces a map $(\gamma_k)_\ast\colon [A,C_k]\to[A,C_{k+1}]$, and the morphism $\gamma_{\infty,k}\colon C_k\to C$ induces a map $(\gamma_{\infty,k})_\ast\colon[A,C_k]\to[A,C]$.

Note that $(\gamma_{\infty,k})_\ast=(\gamma_{\infty,k+1})_\ast\circ(\gamma_k)_\ast$, so that we get a natural map
\begin{align*}
\Phi\colon\varinjlim [A,C_k] \to [A,\varinjlim C_k]=[A,C].
\end{align*}

Statement $(1)$ of the above \autoref{prop:S02:SP_lifting_limits} means that $\Phi$ is surjective, while statement $(2)$ means exactly that $\Phi$ is injective.
\end{rmk}

%==========================================================================================
\autoref{prop:S02:SP_lifting_limits} is proved using a so-called mapping telescope construction, due to L.G.~Brown.
The same proof gives the following partial analog of the above result for weakly semiprojective morphisms:

%==========================================================================================
\begin{prp}
\label{prop:S02:WSP_lifting_limits}
Let $\varphi\colon A\to B$ be a weakly semiprojective morphism, let $(C_k,\gamma_k)$ be an inductive system, and let $\sigma\colon B\to \varinjlim C_k$ be a morphism.
Then, for every $\varepsilon>0$ and $F\ssubset A$, there exist an index $k$ and a morphism $\psi\colon A\to C_k$ such that $\gamma_{\infty,k}\circ\psi=^F_\varepsilon\sigma\circ\varphi$.
\end{prp}

%==========================================================================================
\begin{rmk}
The definition of (weak) semiprojectivity (\autoref{pargr:S02:wSP}) may be reformulated as follows:

A morphism $\varphi\colon A\to B$ is (weakly) semiprojective if for every inductive system $(D_k,\gamma_k)$ \emph{with surjective connecting morphisms}, and for every morphism $\sigma\colon B\to \varinjlim D_k$ (and $\varepsilon>0$ and $F\ssubset A$), there exist an index $k$ and a morphism $\psi\colon A\to D_k$ such that $\gamma_{\infty,k}\circ\psi=\sigma\circ\varphi$ (resp. $\gamma_{\infty,k}\circ\psi=^F_\varepsilon\sigma\circ\varphi$).

Thus, for the definition of (weak) semiprojectivity, we consider morphisms into the limit of an inductive system with surjective connecting morphisms, and we ask for approximate lifts.
It follows from \autoref{prop:S02:WSP_lifting_limits} that for weak semiprojectivity (but not for semiprojectivity) one may drop the condition that the connecting morphisms of the inductive system are surjective.
\end{rmk}

%==========================================================================================
\begin{pgr}
\label{pargr:S02:Generators}
The \emph{generating rank} for a \ca{} $A$, denoted by $\gen(A)$, is the smallest number $n\in\{1,2,3,\ldots,\infty\}$ such that $A$ is generated (as a \ca{}) by $n$ \emph{self-adjoint} elements.
For more details we refer the reader to Nagisa, \cite{Nag04SingleGen}.

We say that $A$ is finitely generated if $\gen(A)<\infty$.
To define when a \ca{} is finitely presentation, one needs a theory of universal \ca{s} defined by generators and relations.
Depending on which relations one considers, one gets different notions of finite presentability.
In \cite{Bla85ShapeThy}, for instance, only polynomial relations are considered.
With this definition, not every finitely generated \ca{} is finitely presented.

More generally, one can define a relation to be an element of the universal \ca{} generated by a countable number of contractions.
This definition is used in \cite{Lor97LiftingSolutions}, and it is flexible enough to show that every finitely generated \ca{} is already finitely presented, see \cite[Lemma~2.2.5]{EilLorPed98StabAnticommutRel}.

Thus, in the results of \cite{Lor97LiftingSolutions} we may replace the assumption of finite presentation by finite generation, for example in \cite[Lemma~15.2.1, 15.2.2, p.118f]{Lor97LiftingSolutions}.
This can be improved even further, as was shown to the author by Chigogidze and Loring (private communication):
One may give a version of \cite[Lemma~15.2.1, p.118]{Lor97LiftingSolutions} which does not require the \ca{} to be finitely generated;
see \autoref{prop:S03:twist_maps_into_subalg}.
It follows that \cite[Lemma~15.2.2, p.119]{Lor97LiftingSolutions} remains true if one drops the assumption of finite generation (or presentation) completely;
see \autoref{prop:S03:C_like_implies_AC}.
\end{pgr}

%==========================================================================================
%==========================================================================================
\section{Approximation and criteria for inductive limits}
\label{sect:S03:approximation}

%==========================================================================================
In this section we give criteria that allow one to write a \ca{} $A$ as an inductive limit of other \ca{s} that approximate $A$ in a nice way.
We start by reviewing the various ways that a \ca{} can be `approximated' by other \ca{s};
see \autoref{pargr:S03:Approximation}.
If $\mathcal{C}$ is a class of \ca{s}, then an inductive limit of algebras in $\mathcal{C}$ is called an $A\mathcal{C}$-algebra.
We suggest to use the formulation that $A$ is `$\mathcal{C}$-like' if it can be approximated by sub-\ca{s} from the class $\mathcal{C}$;
see \autoref{defn:S03:P-like} and \autoref{prop:S03:P_like_is_right_notion}.

As a basic tool to construct an inductive limit decomposition we use one-sided approximate intertwinings;
see \autoref{prop:S03:Intertwining}.
These were introduced by Elliott in \cite[2.1, 2.3]{Ell93ClassRR0} and they turned out to be very important in the classification of \ca{s}, see for example chapter $2.3$ of R{\o}rdam's book, \cite{Ror02Classification}.

Assuming that the class $\mathcal{C}$ consists of separable, weakly semiprojective \ca{s}, we deduce other criteria to write a separable \ca{} as an inductive limit of building blocks in $\mathcal{C}$.
In particular, we show that every separable $A\mathcal{C}$-like \ca{} is an $A\mathcal{C}$-algebras (\autoref{prop:S03:AC-like_implies_AC}), and every separable $AA\mathcal{C}$-algebra is already an $A\mathcal{C}$-algebra (\autoref{prop:S03:AAC_implies_AC}).
The latter statement gives a criterion when an `inductive limit of inductive limits is an inductive limit'.

For example, let $\mathcal{C}$ be the class of finite direct sums of matrices over the circle algebra $C(\mathbb{T})$.
Then the mentioned result means that an inductive limit of $A\mathbb{T}$-algebras is itself an $A\mathbb{T}$-algebra.
This is a well-known result, see for example \cite[Proposition 2]{LinRor95ExtLimitCircle} which is based on \cite[Theorem 4.3]{Ell93ClassRR0}.

%==========================================================================================
\begin{pgr}
\label{pargr:S03:Approximation}
The term `approximation' is used in various contexts.
For instance, if $\mathcal{P}$ is some property that \ca{s} might enjoy, then a \ca{} is usually called \emph{approximately $\mathcal{P}$}, or an $A\mathcal{P}$-algebra, if it can be written as an inductive limit of \ca{s} with property $\mathcal{P}$.
In this sense one speaks of `approximately homogeneous' and `approximately subhomogeneous' \ca{s}.

If a separable \ca{} $A$ can be written as an inductive limit, $\varinjlim A_i$, of separable \ca{s}, indexed over a directed set $I$, then one may find a countable subset of ordered indices $i_1\leq i_2\leq\ldots$ such that $A$ is naturally an inductive limit of the sequential system $(A_{i_k})_k$.
In the study of noncommutative shape theory one usually restricts to separable \ca{s}.
We will therefore assume throughout that an $A\mathcal{P}$-algebra is an inductive limit of a sequential inductive system.
For some approximation results of non-separable \ca{s} we refer the reader to \cite{FarKat10NonsepUHF1}.

Another concept is approximation by subalgebras.
Given a \ca{} $A$, a family $\mathcal{B}$ of sub-\ca{s} is said to \emph{approximate} $A$
if for every $\varepsilon>0$ and $F\ssubset A$ there exists some algebra $B\in\mathcal{B}$ such that $F\subseteq_\varepsilon B$.
In the literature, this is often phrased as `$\mathcal{B}$ locally approximates $A$'.
Similarly, if $\mathcal{P}$ is some property of \ca{s}, then a \ca{} $A$ that can be approximated by sub-\ca{s} with property $\mathcal{P}$ is sometimes called `locally $\mathcal{P}$'.
In this sense one speaks of `locally (sub)homogeneous' \ca{s}.

However, sometimes the word `local' might lead to confusion:
Consider for instance the property of being contractible.
We show in \autoref{prop:S04:contr-like_implies_trivSh} that a \ca{} has trivial shape if it is approximated by contractible sub-\ca{s}.
One could phrase this as `locally contractible \ca{s} have trivial shape', but this would be counterintuitive to the terminology used for topological spaces.

The confusion is due to the contravariant duality between spaces and \ca{s}.
If we consider for instance a commutative \ca{} $C(X)$, then the elements $f\in C(X)$ are almost constant around each point $x\in X$.
Therefore, an approximation of $C(X)$ by sub-\ca{s} does not capture the local structure of $X$, it rather captures the global structure of $X$, its shape.
To prevent confusion, we will use the following definition:
\end{pgr}

%==========================================================================================
\begin{dfn}
\label{defn:S03:P-like}
If $\mathcal{P}$ is some property of \ca{s}, then a \ca{} is said to be \emph{$\mathcal{P}$-like} if it can be approximated by sub-\ca{s} with property $\mathcal{P}$.
\end{dfn}

%==========================================================================================
\begin{rmk}
\label{pargr:S03:P-likeness}
Using the above definition, \autoref{prop:S04:contr-like_implies_trivSh} reads as: `A contractible-like \ca{} has trivial shape'.
This might sound cumbersome, but it is motivated by the concept of $\mathcal{P}$-likeness for spaces, as defined by Mardesic and J. Segal, \cite[Definition 1]{MarSeg63EpsOntoPolyhedra}, and further developed by Marde\v{s}i\'c and Matijevi\'c, \cite{MarMat92PLike}.
In \autoref{prop:S03:P_like_is_right_notion} we show that for commutative \ca{s} both concepts agree.

For a space $X$, we let $\Cov(X)$ denote the collection of finite, open covers of $X$.
Given $\mathcal{U}_1,\mathcal{U}_1\in\Cov(X)$, we write $\mathcal{U}_1\leq\mathcal{U}_2$ if the cover $\mathcal{U}_1$ refines the cover $\mathcal{U}_2$, that is, if for every $U\in\mathcal{U}_1$ there exists some $U'\in\mathcal{U}_2$ such that $U\subseteq U'$.
We refer the reader to chapter $2$ of Nagami's book \cite{Nag70DimThy} for details.

We are working in the category of pointed spaces and pointed maps since it is the natural setting to study non-unital commutative \ca{s}, as pointed out in
\cite[II.2.2.7, p.61]{Bla06OpAlgs}.
If we include basepoints and restrict to compact spaces, then the definition of $\mathcal{P}$-likeness from \cite[Definition 1.2]{MarMat92PLike} becomes:
Let $\mathcal{P}$ be a non-empty class of pointed, compact, Hausdorff spaces.
A pointed, compact, Hausdorff space $X$ is said to be $\mathcal{P}$-like if for every $\mathcal{U}\in\Cov(X)$ there exists a pointed map $f\colon X\to Y$ onto some $Y\in\mathcal{P}$ and $\mathcal{V}\in\Cov(Y)$ such that $f^{-1}(\mathcal{V})\leq\mathcal{U}$, where $f^{-1}(\mathcal{V})=\{f^{-1}(V) : V\in\mathcal{V}\}$.

If, moreover, $X$ and all space in $\mathcal{P}$ are metric spaces, then one can show that $X$ is $\mathcal{P}$-like if and only if for every $\varepsilon>0$ there exists a pointed map $f\colon X\to Y$ onto some $Y\in\mathcal{P}$ such that the sets $f^{-1}(y)$ have diameter less than $\varepsilon$ (for all $y\in Y$).
This equivalent formulation is the original definition of $\mathcal{P}$-likeness for compact, metric spaces, \cite[Definition 1]{MarSeg63EpsOntoPolyhedra}.

Note that we have used $\mathcal{P}$ to denote both a class of spaces and a property that spaces might enjoy.
These are just different viewpoints, as we can naturally assign to a property the class of spaces with that property, and vice versa to each class of spaces the property of lying in that class.
\end{rmk}

%==========================================================================================
Let us use the following notation for the next result:
If $X=(X,x_\infty)$ is a pointed space, then $C_0(X)=\{a\colon X\to\CC : a(x_\infty)=0\}$ denotes the \ca{} of continuous functions on $X$ vanishing at the basepoint.

%==========================================================================================
\begin{prp}
\label{prop:S03:P_like_is_right_notion}
Let $X=(X,x_\infty)$ be a pointed, compact, Hausdorff space, and let $\mathcal{P}$ be a class of pointed, compact, Hausdorff spaces.
Then the following are equivalent:
\begin{enumerate}
\item
$X$ is $\mathcal{P}$-like.
\item
$C_0(X)$ can be approximated by sub-\ca{s} $C_0(Y)$ with $Y\in\mathcal{P}$.
\end{enumerate}
\end{prp}
\begin{proof}
To show that~(1) implies~(2), let $\varepsilon>0$ and $F\ssubset C_0(X)$.
Since $X$ is compact, there exists a cover $\mathcal{U}\in\Cov(X)$ such that $\|a(x)-a(x')\|<\varepsilon$ whenever $a\in F$ and $x,x'$ lie in some set $U\in\mathcal{U}$.
By assumption, there is a pointed map $f\colon X\to Y$ onto some space $Y=(Y,y_\infty)\in\mathcal{P}$ and $\mathcal{V}\in\Cov(Y)$ such that $f^{-1}(\mathcal{V})\leq\mathcal{U}$.
Note that $f$ induces an inclusion $f^\ast\colon C_0(Y)\to C_0(X)$.

Choose a partition of unity $(e_V)_{V\in\mathcal{V}}$ on $Y$ that is subordinate to $\mathcal{V}$.
For each $V\in\mathcal{V}$, choose a point $x_V\in f^{-1}(V)$ such that $x_V=x_\infty$ if $y_\infty\in V$.
Given $a\in F$, let us show that
\[
a\in_\varepsilon f^\ast(C_0(Y)).
\]
Set
\[
b:=\sum_V a(x_V)e_V.
\]
Note that $b(y_\infty)=0$ since $a(x_V)=0$ whenever $e_V(y_\infty)\neq 0$.
Given $x\in X$, we have $e_V(f(x))\neq 0$ only if $f(x)\in V$, in which case we have $x,x_V\in f^{-1}(V)$, and using that $f^{-1}(V)$ is contained in some set $U\in\mathcal{U}$, we deduce that $\|a(x)-a(x_V)\|<\varepsilon$.
Using this at the third step, we obtain that
\begin{align*}
\left\| a(x)-f^\ast(b)(x) \right\|
&= \left\| a(x)-\sum_V a(x_V)e_V(f(x)) \right\|
= \left\| \sum_V (a(x)-a(x_V))e_V(f(x)) \right\| \\
&< \varepsilon\cdot \left\| \sum_V e_V(f(x)) \right\|
=\varepsilon,
\end{align*}
for every $x\in X$.
Hence, $a\in_\varepsilon f^\ast(C_0(Y))$, as desired.

To show that~(2) implies~(1), let $\mathcal{U}=(U_\alpha)_\alpha\in\Cov(X)$ be a finite, open cover of $X$.
We need to find a space $Y\in\mathcal{P}$ together with a pointed, surjective map $f\colon X\to Y$ and $\mathcal{V}\in\Cov(V)$ such that $f^{-1}(\mathcal{V})\leq\mathcal{U}$.

By passing to a refinement, we may assume that $x_\infty$ is contained in just one $U_\alpha$, call it $U_\infty$.
Since $X$ is a normal space, for each $\alpha$ we may find an open set $V_\alpha\subseteq X$ such that $V_\alpha\subseteq\overline{V_\alpha}\subseteq U_\alpha$, and such that $(V_\alpha)_\alpha$ is a cover of $X$.
By Urysohn's lemma, there are continuous functions $a_\alpha\colon X\to\CC$ that are $1$ on $\overline{V_\alpha}$ and zero on $X\setminus U_\alpha$.
Note that $a_\alpha$ vanishes on $x_\infty$ for $\alpha\neq\infty$, so that $a_\alpha\in C_0(X)$ for $\alpha\neq\infty$.

By assumption, we obtain a sub-\ca{} $C_0(Y)$ of $C_0(X)$ that contains the $a_\alpha$ (for $\alpha\neq\infty$) up to $1/2$ and such that $Y=(Y,y_\infty)\in\mathcal{P}$.
The embedding corresponds to a pointed, surjective map $f\colon X\to Y$.
For $\alpha\neq\infty$, let $b_\alpha\in C_0(Y)$ be an element such that $\|a_\alpha-f^\ast(b_\alpha)\|<1/2$.

For $\alpha\neq\infty$, set
\[
W_\alpha := \big\{ y\in Y : \|b_\alpha(y)\|>1/2 \big\}.
\]
Further, set
\[
W_\infty := Y\setminus f\big( \bigcup_{\alpha\neq\infty}\overline{V_\alpha} \big).
\]
For $\alpha\neq\infty$, we compute
\[
f^{-1}(W_\alpha)
= \big\{ x\in X : \|b_\alpha(f(x))\|>1/2 \big\}
\subseteq \big\{ x\in X : \|a_\alpha(x)\|>0 \big\}
\subseteq U_\alpha,
\]
and
\[
f^{-1}(W_\alpha)
\supseteq \big\{ x\in X : \|a_\alpha(x)\|\geq 1 \big\}
\supseteq \overline{V_\alpha}.
\]
We also have
\[
f^{-1}(W_\infty)
\subseteq X\setminus \bigcup_{\alpha\neq\infty}\overline{V_\alpha}
\subseteq U_\infty.
\]
It follows that $f^{-1}(\bigcup_{\alpha\neq\infty}W_\alpha)\supseteq \bigcup_{\alpha\neq\infty}\overline{V_\alpha}$, and thus
\[
\bigcup_{\alpha\neq\infty}W_\alpha
\supseteq f \big( \bigcup_{\alpha\neq\infty}\overline{V_\alpha} \big)
= Y\setminus W_\infty.
\]
This shows that $\mathcal{W}:=(W_\alpha)_\alpha$ is a cover of $Y$ with $f^{-1}(\mathcal{W})\leq\mathcal{U}$, as desired.
\end{proof}

%==========================================================================================
The following result formalizes the construction of a (special) one-sided approximate intertwining.
The idea goes back to Elliott, \cite[2.3,2.4]{Ell93ClassRR0}, see \cite[2.3]{Ror02Classification}.
It seems that the version given here has not appeared in the literature so far.
Note that we do not require any ordering on the index set of approximating algebras.

%==========================================================================================
\begin{prp}
\label{prop:S03:Intertwining}
Let $A$ be a separable \ca{}, and let $A_i$ ($i\in I$) be a collection of separable \ca{s} together with morphisms $\varphi_i\colon A_i\to A$.

Assume that the following holds:
For every $i\in I$, $\varepsilon>0$, $F\ssubset A_i$, $E\ssubset\ker(\varphi_i)$ and $H\ssubset A$, there exist $j\in I$ and a morphism $\psi\colon A_i\to A_j$ such that:
\begin{enumerate}%[\quad (\approxA 1)  ]
\item[\approxARef{1}]
\label{defn:S03:A-Approx:commute}
$\varphi_j\circ\psi=_{\varepsilon}^F\varphi_i$,
\item[\approxARef{2}]
\label{defn:S03:A-Approx:kill_kernel}
$\psi=_{\varepsilon}^E0$,
\item[\approxARef{3}]
\label{defn:S03:A-Approx:large_image}
$H\subseteq_\varepsilon\im(\varphi_j)$.
\end{enumerate}

Then $A$ is isomorphic to an inductive limit of some of the algebras $A_i$.
More precisely, there exist indices $i(1),i(2),\ldots\in I$ and morphisms $\psi_k\colon A_{i(k)}\to A_{i(k+1)}$ such that $A\cong\varinjlim_k(A_{i(k)},\psi_k)$.
\end{prp}
\begin{proof}
By induction, we will construct a one-sided approximate intertwining as shown in the following diagram.
This diagram does not commute, but it `approximately commutes'.
\begin{center}
\makebox{
\xymatrix{
A_{i(1)} \ar[r]^{\psi_1} \ar[d]_{\varphi_{i(1)}}
    & A_{i(2)} \ar[r]^{\psi_2} \ar[d]_{\varphi_{i(2)}}
    & A_{i(2)} \ar[r] \ar[d]_{\varphi_{i(3)}}
    & \ldots \ar[r]
    &  B \ar[d]^{\omega}
    \\
A \ar[r]
    & A \ar[r]
    & A \ar[r]
    & \ldots \ar[r]
    &  A
    \\
}}
\end{center}

Property \approxARef{1} is the essential requirement for constructing the one-sided approximate intertwining, that is, to align some of the algebras $A_i$ into an inductive system with limit $B$ together with a canonical morphism $\omega\colon \varinjlim\mathcal{B}\to A$.
Property \approxARef{2} is used to get $\omega$ injective, and \approxARef{2} is used to ensure that $\omega$ is surjective.

More precisely, we proceed as follows:
Let $(x_1,x_2,\ldots)$ be a dense sequence in $A$ with $x_1=0$.
We will construct the following:
\begin{itemize}
\item
indices $i(k)\in I$, for $k\in\NN$ with $k\geq 1$,
\item
morphisms $\psi_k\colon A_{i(k)}\to A_{i(k+1)}$, for $k\geq 1$,
\item
finite subsets $F_k^l\subseteq A_{i(k)}$, for $k,l\geq 1$,
\item
finite subsets $E'_k\subseteq\ker(\varphi_{i(k)})$, for $k\geq 1$,
\end{itemize}
such that
\begin{enumerate}[(a)]
\item
\label{prop:S03:Main_Criterion:cond1}
$\psi_k(F_k^l)\subseteq F_{k+1}^l$, for all $k,l\geq 1$,
\item
\label{prop:S03:Main_Criterion:cond2}
$F_k^1\subseteq F_k^2\subseteq\ldots$, and $\bigcup_{l\geq 1}F_k^l$ is dense in $A_{i(k)}$, for each $k\geq 1$,
\item
\label{prop:S03:Main_Criterion:cond3}
$\dist(a,E_k')\leq\|\varphi_{i(k)}(a)\|+1/2^k$, for all $k\geq 1$ and $a\in F_k^k$,
\item
\label{prop:S03:Main_Criterion:cond4}
$\varphi_{i(k+1)}\circ\psi_k=_{1/2^k}^{F_k^k}\varphi_{i(k)}$, for each $k\geq 1$,
\item
\label{prop:S03:Main_Criterion:cond5}
$\psi_k=_{1/2^k}^{E'_k}0$, for each $k\geq 1$,
\item
\label{prop:S03:Main_Criterion:cond6}
$\{x_1,\ldots,x_k\}\subseteq_{1/2^{k-1}}\varphi_{i(k)}(F_k^k)$, for each $k\geq 1$.
\end{enumerate}

We start by fixing any $i(1)$ in $I$.
Since $x_1=0$, \myRef{prop:S03:Main_Criterion:cond6} is satisfied.
Choose sets $F_1^l$, for $l\geq 1$, satisfying \myRef{prop:S03:Main_Criterion:cond2}.
Then choose $E'_1$ fulfilling property \myRef{prop:S03:Main_Criterion:cond3}.

Let us manufacture the induction step from $k$ to $k+1$.
We consider the index $i(k)$, the tolerance $1/2^{k}$, and the finite sets $F_k^k\subseteq A_{i(k)}$, $E'_k\subseteq\ker(\varphi_{i(k)})$, and $\{x_1,\ldots,x_{k+1}\}\subseteq A$.
By assumption, there is an index $i(k+1)$, and a morphism $\psi_k\colon A_{i(k)}\to A_{i(k+1)}$ satisfying conditions \myRef{prop:S03:Main_Criterion:cond4}, \myRef{prop:S03:Main_Criterion:cond5} and \myRef{prop:S03:Main_Criterion:cond6}.
Choose sets $F_{k+1}^l$, for $l\geq 1$, satisfying properties \myRef{prop:S03:Main_Criterion:cond1} and \myRef{prop:S03:Main_Criterion:cond2}.
Then choose $E_{k+1}'$ fulfilling property \myRef{prop:S03:Main_Criterion:cond3}.

Set $B:=\varinjlim_k(A_{i(k)},\psi_k)$.
Given $k\geq 1$ and $a\in A_{i(k)}$, it follows from \myRef{prop:S03:Main_Criterion:cond1}, \myRef{prop:S03:Main_Criterion:cond2} and \myRef{prop:S03:Main_Criterion:cond4} that $(\varphi_{i(s)}\circ\psi_{s,k})(a)$ is a Cauchy sequence for $s\to\infty$.
We may therefore define morphisms $\omega_k\colon A_{i(k)}\to A$ by
\begin{align*}
\omega_k(a):=\lim_{s\to\infty}(\varphi_{i(s)}\circ\psi_{s,k})(a),
\end{align*}
for $k\geq 1$ and $a\in A_{i(k)}$.
Note that $\omega_l\circ\psi_{l,k}=\omega_k$ for any $k\leq l$.
Thus, the morphisms $\omega_k$ fit together to define a morphism $\omega\colon B\to A$.

Claim.
Let $k\geq 1$.
Then $\|\psi_{\infty,k}(x)\|\leq \|\omega_k(x)\|$, for all $x\in A_{i(k)}$.
Since $\bigcup_l F_k^l$ is dense in $A_{i(k)}$, it is enough to show the inequality for $x\in \bigcup_l F_k^l$.
To prove the claim, let $l\geq 1$ and let $x\in F_k^l$.
Let $n\geq k,l$, and set $b:=\psi_{n,k}(a)$, which belongs to $F_n^n$.
It follows from \myRef{prop:S03:Main_Criterion:cond1} and \myRef{prop:S03:Main_Criterion:cond4} that
\[
\omega_n=^{F_n^n}_{1/2^{n-1}}\varphi_{i(n)}.
\]
We deduce that
\[
\|\varphi_{i(n)}(b)\| < \|\omega_n(b)\| + 1/2^{n-1}.
\]
By \myRef{prop:S03:Main_Criterion:cond3}, we obtain $e\in E_n'$ satisfying
\[
\|b-e\|\leq\|\varphi_{i(n)}(b)\|+1/2^n.
\]
By \myRef{prop:S03:Main_Criterion:cond5}, we have $\|\psi_n(e)\|<1/2^n$.
We deduce that
\begin{align*}
\|\psi_{\infty,k}(a)\|
&\leq \|\psi_n(b)\|
= \|\psi_n(b-e+e)\|
\leq \|b-e\| + \|\psi_n(e)\| \\
&\leq \|\varphi_{i(n)}(b)\| + 1/2^n+1/2^n
\leq \|\omega_n(b)\| + 1/2^{n-2}
= \|\omega_k(a)\| + 1/2^{n-2}.
\end{align*}
Since this holds for all $n\geq k,l$, we obtain the claimed inequality.

It follows from the claim that $\omega$ is injective.
To verify that $\omega$ is surjective, let $a\in A$ and let $\varepsilon>0$.
Since the sequence $x_1,x_2,\ldots$ is dense in $A$, there exists some $l\geq 1$ with $\|a-x_l\|<\varepsilon/3$.
Let $k\geq l$ be a number with $1/2^{k-1}<\varepsilon/3$.
As noted in the proof of the above claim, we have $\omega_k=_{1/2^{k-1}}^{F_k^k}\varphi_{i(k)}$.
Using this at the third step, and using \myRef{prop:S03:Main_Criterion:cond6} at the second step, we deduce that
\begin{align*}
a
\;\;\in_{\varepsilon/3}\;\;\{x_1,\ldots,x_k\}
\;\;\subseteq_{1/2^{k-1}}\;\;\varphi_{i(k)}(F_k^k)
\;\;\subseteq_{1/2^{k-1}}\;\;\omega_k(F_k^k)
\;\;\subseteq\;\;\im(\omega).
\end{align*}
Together, $a$ lies in $\im(\omega)$ up to $\varepsilon/4+1/2^{k-1}+1/2^{k-1}<\varepsilon$.
Since $\varepsilon>0$ was arbitrary, we deduce that $a\in\im(\omega)$.
\end{proof}

%==========================================================================================
\begin{pgr}
Let us consider a weaker approximation than in \autoref{prop:S03:Intertwining}, where we relax condition \approxARef{3}.
Let us assume that the following situation is given:

Let $A$ be a separable \ca{}, and let $A_i$ ($i\in I$) be a collection of separable \ca{s} together with morphisms $\varphi_i\colon A_i\to A$, such that the following holds:
For every $i\in I$, $\varepsilon>0$, $F\ssubset A_i$ and $E\ssubset\ker(\varphi_i)$, there exist $j\in J$ and a morphism $\psi\colon A_i\to A_j$ such that:
\begin{enumerate}
\item[\approxARef{1}]
\label{defn:S03:B-Approx:commute}
$\varphi_j\circ\psi=_{\varepsilon}^F\varphi_i$,
\item[\approxARef{2}]
\label{defn:S03:B-Approx:kill_kernel}
$\psi=_{\varepsilon}^E0$,
\end{enumerate}
and moreover, the following condition holds:
\begin{enumerate}
\item[\approxARef{3'}]
\label{defn:S03:B-Approx:large_image}
the collection $\{\im(\varphi_i):i\in I\}$ approximates $A$.
\end{enumerate}

Condition \approxARef{3} of \autoref{prop:S03:Intertwining} is a statement about the morphism $\psi$.
It roughly says that $\psi$ has `large' image.
The above condition \approxARef{3'} is independent of the morphism $\psi$.
It just requires that the collection of all sub-\ca{s} $\im(\varphi_i)$ is `large'.

Adopting the proof of \autoref{prop:S03:Intertwining}, we may construct one-sided approximate intertwinings to get the following result:
For every $\gamma>0$ and $H\ssubset A$, there exists a sub-\ca{} $B\subseteq A$ such that $H\subseteq_\gamma B$ and such that $B$ is an inductive limit of some of the algebras $A_i$.

If we denote by $\mathcal{C}=\{A_i : i\in I\}$ the class of approximating algebras, then this means precisely that $A$ is $A\mathcal{C}$-like, that is, $A$ is approximated by sub-\ca{s} that are inductive limits of algebras in $\mathcal{C}$.
In general, this does not imply that $A$ is an $A\mathcal{C}$-algebra.
In fact, not even a $\mathcal{C}$-like \ca{} need to be an $A\mathcal{C}$-algebra, as can be seen by the following example.
\end{pgr}

%==========================================================================================
\begin{exa}[{Dadarlat, Eilers, \cite{DadEil99AHNotLocal}}]
\label{exmpl:S03:Counterexmpl_limit_of_limit}
Let us denote by $H$ the class of (direct sums of) homogeneous \ca{s}.
An inductive limit of \ca{s} in $H$ is called an $AH$-algebra.
In \cite{DadEil99AHNotLocal}, Dadarlat and Eilers construct a \ca{} $A=\varinjlim_k A_k$ that is an inductive limit of $AH$-algebras $A_k$ (so $A$ is an $AAH$-algebra) but such that $A$ is not an $AH$-algebra itself.
Thus, an $AA\mathcal{C}$-algebra need not be an $A\mathcal{C}$-algebra in general.

Since quotients of homogeneous algebras are again homogeneous, the \ca{} $A$ is also $H$-like.
Thus, the example also shows that a $\mathcal{C}$-like algebra need not be an $A\mathcal{C}$-algebra in general.

In the example of Dadarlat and Eilers, each \ca{} $A_k$ is an inductive limit, $\varinjlim_n A_k^n$, of \ca{s} $A_k^n$ that have the form $\bigoplus_{i=1}^d M_{d_i}(C(X_i))$ with each $X_i$ a three-dimensional CW-complex.
It is well-known that $C(X_i)$ is not weakly semiprojective if $X_i$ contains a copy of the two-dimensional disc;
see for example \cite[Remark~3.3]{SoeThi12CharCommutSP}.
It follows that the algebras $A_k^n$ are not weakly semiprojective.
This is the crucial point, as will be shown in \autoref{prop:S03:AC-like_implies_AC} and \autoref{prop:S03:AAC_implies_AC}.
\end{exa}

%==========================================================================================
The following \autoref{prop:S03:twist_maps_into_subalg} is a variant of \cite[Lemma~15.2.1, p.118]{Lor97LiftingSolutions} that avoids the assumption of finite generation, see \autoref{pargr:S02:Generators}.
It was shown to the author by Chigogidze and Loring (private communication).
The result is used in the proof of \autoref{prop:S03:AC-like_implies_AC} to `twist' morphisms from weakly semiprojective \ca{s}.
We include a proof for completeness.

%==========================================================================================
\begin{lma}[{Chigogidze and Loring}]
\label{prop:S03:twist_maps_into_subalg}
Let $A$ be a separable, weakly semiprojective \ca{}.
Then for every $\varepsilon>0$ and $F\ssubset A$, there exist $\delta>0$ and $G\ssubset A$ such that the following holds:
Whenever $\varphi\colon A\to B$ is a morphism to another \ca{} $B$, and whenever $C\subseteq B$ is a sub-\ca{} with $\varphi(G)\subseteq_\delta C$, then there exists a morphism $\psi\colon A\to C$ such that $\psi=_\varepsilon^F\varphi$.
\end{lma}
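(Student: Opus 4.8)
The plan is to exploit weak semiprojectivity of $A$ applied to a clever inclusion of sub-$C^*$-algebras. First I would fix $\varepsilon>0$ and a finite set $F\subset A$. The key observation is that the hypothesis ``$C$ contains $\varphi(G)$ up to $\delta$'' should be turned into the data of an approximate lift problem along a quotient-type map. The natural way to do this: given a morphism $\varphi\colon A\to B$ and a sub-$C^*$-algebra $C\subset B$ with $\varphi(G)\subset_\delta C$, I want to produce a genuine morphism $A\to C$ close to $\varphi$ on $F$. I would set this up by considering a universal construction. Since $A$ is weakly semiprojective and (by the remark on finitely-presented algebras, if we also knew $A$ were finitely generated --- but here we only assume weak semiprojectivity, so we argue directly) we can invoke the standard ``stability under perturbation'' style argument: weak semiprojectivity gives, for the given $\varepsilon,F$, a $\delta_0>0$ and finite $G_0\subset A$ so that any map defined ``$\delta_0$-approximately on $G_0$'' into a $C^*$-algebra can be corrected to an honest morphism agreeing with it up to $\varepsilon$ on $F$.

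The core step is the reduction to the standard approximate-lifting form of weak semiprojectivity from \ref{pargr:S02:wSP}. I would argue as follows. Suppose, for contradiction, that no such $\delta$ and $G$ exist. Then for every $n$, taking $\delta=1/n$ and $G=G_n$ an increasing sequence of finite sets with dense union in $A$, there are morphisms $\varphi_n\colon A\to B_n$ and sub-$C^*$-algebras $C_n\subset B_n$ with $\varphi_n(G_n)\subset_{1/n} C_n$ but no morphism $\psi\colon A\to C_n$ with $\psi=_\varepsilon^F\varphi_n$. Now form $C:=\prod_n C_n$ and the ideal $J:=\bigoplus_n C_n$, so $C/J$ is the ``sequence algebra'' $\prod_n C_n / \bigoplus_n C_n$. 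The conditions $\varphi_n(G_n)\subset_{1/n}C_n$ let me choose $c_n^{(g)}\in C_n$ with $\|\varphi_n(g)-c_n^{(g)}\|<1/n$ for $g\in G_n$; the tuples $(c_n^{(g)})_n$ define elements of $C$ whose images in $C/J$ satisfy exactly the relations satisfied by the $\varphi_n(g)$ asymptotically, hence (since $\varphi_n$ are $\ast$-homomorphisms and $G_n\nearrow$ a dense set) assemble into a genuine morphism $\sigma\colon A\to C/J$. By weak semiprojectivity of $A$ (in the approximate-lifting form, applied with the increasing ideals $J_k=\bigoplus_{n\le k}C_n$, or more simply: $C/J$ is the inductive limit with surjective connecting maps of the $C/J_k\cong \prod_{n>k}C_n$), there is some $k$ and a morphism $\tilde\psi\colon A\to C/J_k=\prod_{n>k}C_n$ with $\pi_k\tilde\psi=_\varepsilon^F\sigma$. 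Composing with the coordinate projection to $C_n$ for some $n>k$ large enough gives a morphism $\psi\colon A\to C_n$ with $\psi=_\varepsilon^F\varphi_n$ (using that the coordinate projections of $\sigma$ agree asymptotically with $\varphi_n$ on $F$), contradicting the choice of $C_n$.

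The main obstacle I expect is the bookkeeping in assembling the asymptotic morphism $\sigma\colon A\to C/J$: one must check that the maps $g\mapsto (c_n^{(g)})_n + J$ are well-defined independent of the choice of approximants, are $\ast$-linear and multiplicative (this works because the $\varphi_n$ are exact homomorphisms and the errors go to $0$), and extend continuously from the dense union $\bigcup_n G_n$ to all of $A$ --- the continuity requires that $\|(c_n^{(g)})_n+J\| = \limsup_n\|c_n^{(g)}\| = \limsup_n\|\varphi_n(g)\|\le\|g\|$, which is fine. A secondary subtlety is making the quantifiers match the statement: the statement wants a \emph{single} $\delta$ and $G$ working for \emph{all} $\varphi$ and $C$ simultaneously, which is precisely what the compactness-style contradiction argument delivers. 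Once $\sigma$ is in hand, the rest is a direct application of \ref{pargr:S02:wSP} and a diagonal/coordinate-projection argument, so there is no further serious difficulty.
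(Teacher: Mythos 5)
The paper does not actually prove this lemma: it is quoted as a variant of \cite[Lemma 15.2.1]{Lor1997} ``shown to me by Loring and Chigogidze,'' so there is no in-paper argument to compare against. Your contradiction argument via the sequence algebra $\prod_n C_n/\bigoplus_n C_n$ is the standard way to prove such perturbation statements and it is correct; it has the advantage of needing only weak semiprojectivity and separability of $A$, with no finite-presentation bookkeeping as in Loring's book. Two points are worth nailing down when you write it out. First, rather than assembling $\sigma$ element-by-element from the approximants $c_n^{(g)}$ and verifying multiplicativity by hand, it is cleaner to take the honest morphism $a\mapsto(\varphi_n(a))_n$ into $\prod_n B_n$, pass to $\prod_n B_n/\bigoplus_n B_n$, and observe that its image lies in the closed subalgebra $\prod_n C_n/\bigoplus_n C_n$ (closed because $\bigoplus_n C_n=\prod_n C_n\cap\bigoplus_n B_n$): indeed $\operatorname{dist}(\varphi_n(a),C_n)\to 0$ for \emph{every} $a\in A$, by density of $\bigcup_n G_n$ and contractivity of the $\varphi_n$. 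This also makes the final step transparent: for $x\in F$ one gets $\limsup_n\|\psi_n(x)-\varphi_n(x)\|=\|\pi_k\tilde\psi(x)-\sigma(x)\|<\varepsilon$, and finiteness of $F$ yields a single $n>k$ with $\psi_n=_\varepsilon^F\varphi_n$, the desired contradiction. Second, $\prod_n C_n$ is not separable, so if one insists on the paper's blanket separability convention in the definition of weak semiprojectivity from \ref{pargr:S02:wSP}, you should note the routine reduction to the separable subalgebra of $\prod_n C_n$ generated by $J=\bigoplus_n C_n$ together with lifts of $\sigma$ on a countable dense subset of $A$; the ideals $J_k=\bigoplus_{n\le k}C_n$ restrict to this subalgebra and the argument is unchanged.
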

\begin{proof}
Let $A$, $F$ and $\varepsilon$ be as in the statement.
Let $P:=C^\ast(x_1,x_2,\ldots : \|x_i\|\leq 1)$ be the universal \ca{} generated by a sequence of contractions.
Since $A$ is separable, there exists a surjective morphism $\pi\colon P\to A$.
Denoting the cardinality of $F$ by $|F|$, we let $F':=\{x_1,\ldots,x_{|F|}\}\subseteq P$.
We may choose $\pi$ such that $F=\pi(F')$.
We denote the kernel of $\pi$ by $J$.
Since $J$ is separable, there exists a strictly positive element $h$ in $J$.
Set $J_0=\{0\}$ and for each $k\geq 1$ let $J_k$ be the ideal of $P$ generated by $(h-\tfrac{1}{k})_+$.
Then $(J_k)_k$ is an increasing sequence of ideals with $J=\overline{\bigcup_k J_k}$.
We consider this as an inductive system and we let $\pi_{l,k}\colon P/J_k\to P/J_l$ denote the (surjective) connecting morphisms.
With this notation, $\pi=\pi_{\infty,0}$.

Since $A$ is weakly semiprojective, there exist $n'$ and a morphism $\sigma'\colon A\to P/J_{n'}$ such that $\pi_{\infty,n'}\circ\sigma'=_{\varepsilon/2}^F\id_A$.
Then $\pi_{\infty,n'}\circ\sigma'\circ\pi=_{\varepsilon/2}^{F'}\pi$.
For each $i=1,\ldots,|F|$, we have
\[
\varepsilon/2
> \left\| \big( \pi_{\infty,n'}\circ\sigma'\circ\pi\big)(x_i)-\pi(x_i) \right\|
= \lim_{n\geq n'} \left\| \big(\pi_{n,n'}\circ\sigma'\circ\pi\big)(x_i)-\pi_{n,0}(x_i) \right\|.
\]
Thus, there is $n\geq n'$ such that $\pi_{n,n'}\circ\sigma'\circ\pi=_{\varepsilon/2}^{F'}\pi_{n,0}$.
Set $\sigma:=\pi_{n,n'}\circ\sigma'\colon A\to P/J_n$.
Then $\sigma\circ\pi=_{\varepsilon/2}^{F'}\pi_{n,0}$.
The situation and the maps to be constructed are shown in the following diagram.
\begin{center}
\makebox{
\xymatrix{
& P \ar[dr]^{\gamma} \ar[d]_{\pi_{n,0}} \\
& P/J_n \ar[r]^{\tilde{\gamma}} \ar[d]^{\pi_{\infty,n}} & C \ar@{^{(}->}[d] \\
A \ar@{}[r]|{\cong} & P/J \ar[r]^{\varphi} \ar@/^1pc/[u]^{\sigma} & B
}}
\end{center}

Since the elements $x_i$ generate $P$, there exist $d\in\NN$ and a ${}^*$-polynomial $p$ in $d$ non-commuting variables such that $h=_{1/3n}p(x_1,\ldots,x_d)$.
Then there exists $\delta>0$ with the following property:
If $D$ is a \ca{}, and if $y_i\in D$ and $z_i\in D$ are contractive elements with $y_i=_\delta z_i$ for $i=1,\ldots,d$, then $p(y_1,\ldots,y_d)=_{1/3n}p(z_1,\ldots,z_d)$.
(Note that $\delta$ depends only on $p$ and not on the \ca{} $D$ or the elements.)
We may assume that $F'\subseteq\{x_1,\ldots,x_d\}$ and $\delta<\varepsilon/2$.
Set $G=\{\pi(x_1),\ldots,\pi(x_d)\}$.

We claim that $G$ and $\delta$ have the desired properties.
So let $\varphi\colon A\to B$ be a morphism, and let $C\subseteq B$ be a sub-\ca{} with $\varphi(G)\subseteq_\delta C$.
This means that there exist elements $c_i\in C$ with $c_i=_\delta(\varphi\circ\pi)(x_i)$ for $i=1,\ldots,d$.
We define a morphism $\gamma\colon P\to C$ by sending $x_i$ to $c_i$ for $i\leq d$ and to $0\in C$ for $i>l$.
Note that this implies $\gamma=_{\varepsilon/2}^{F'}\varphi\circ\pi$.

By choice of $\delta$, we get
\[
p(c_1,\ldots,c_d)
=_{1/3n} p\big( (\varphi\circ\pi)(x_1),\ldots,(\varphi\circ\pi)(x_d) \big)
= (\varphi\circ\pi)(p(x_1,\ldots,x_d)).
\]
Using this at the third step, we obtain:
\begin{align*}
\gamma(h)
&=_{1/3n} \gamma(p(x_1,\ldots,x_d))
= p(c_1,\ldots,c_d)
=_{1/3n} (\varphi\circ\pi)(p(x_1,\ldots,x_d)) \\
&=_{1/3n} (\varphi\circ\pi)(h).
\end{align*}
Since $\pi(h)=0$, we get $\|\gamma(h)\|<1/n$.
It follows that $\gamma((h-\tfrac{1}{n})_+)=0$, and so the kernel of $\gamma$ contains $J_n$.
Thus, there exists a morphism $\tilde{\gamma}\colon P/J_n\to C$ such that $\gamma=\tilde{\gamma}\circ\pi_{n,0}$.

Set $\psi:=\tilde{\gamma}\circ\sigma\colon A\to C$.
Using that $\sigma\circ\pi=_{\varepsilon/2}^{F'}\pi_{n,0}$ at the second step, we get
\[
\psi\circ\pi
=\tilde{\gamma}\circ\sigma\circ\pi
=_{\varepsilon/2}^{F'}\tilde{\gamma}\circ\pi_{n,0}
=\gamma
=_{\varepsilon/2}^{F'}\varphi\circ\pi.
\]
It follows that $\psi=_\varepsilon^{F}\varphi$, as desired.
\end{proof}

%==========================================================================================
\begin{thm}
\label{prop:S03:AC-like_implies_AC}
Let $\mathcal{C}$ be a family of separable, weakly semiprojective \ca{s}.
Then every separable $A\mathcal{C}$-like \ca{} is already an $A\mathcal{C}$-algebra.
\end{thm}
\begin{proof}
Let $A$ be an $A\mathcal{C}$-like \ca{}.
We want to apply the one-sided approximate intertwining, \autoref{prop:S03:Intertwining}, to show that $A$ is an $A\mathcal{C}$-algebra.
For this we consider the collection of all morphisms $\varphi\colon C\to A$ where $C$ is a \ca{} from $\mathcal{C}$ (we may think of this collection as being indexed over $\coprod_{C\in\mathcal{C}}\text{Hom}(C,A)$).

We need to check the requirements for \autoref{prop:S03:Intertwining}.
So assume the following data is given:
A morphism $\varphi\colon C\to A$ with $C\in\mathcal{C}$, $\varepsilon>0$, $F\ssubset C$, $E\ssubset\ker(\varphi)$ and $H\ssubset A$.
We may assume that $F$ contains $E$.
We need to find a \ca{} $C'\in\mathcal{C}$ together with a morphism $\varphi'\colon C'\to A$, and a morphism $\psi\colon C\to C'$ such that \approxARef{1}, \approxARef{2} and \approxARef{3} are satisfied.

Applying \autoref{prop:S03:twist_maps_into_subalg} to the weakly semiprojective \ca{} $C$ for $\varepsilon/3$ and $F\subseteq C$, we obtain $\delta>0$ and $G\ssubset C$ such that any morphism out of $C$ that maps $G$ up to $\delta$ into a given sub-\ca{} can be twisted to map exactly into that sub-\ca{} while moving $F$ at most by $\varepsilon$.
We may assume that $\delta\leq\varepsilon/3$.

Set $H':=H\cup\varphi(G)$, which is a finite subset of $A$.
By assumption, there exists a sub-\ca{} $B\subseteq A$ that contains $H'$ up to $\delta$ and which is an $A\mathcal{C}$-algebra, say $B=\varinjlim_k C_k$ with connecting morphisms $\gamma_k\colon C_k\to C_{k+1}$.
Since $\varphi(G)\subseteq_\delta B$, there exists a morphism $\alpha\colon C\to B$ such that $\varphi=_{\varepsilon/3}^F\alpha$.
The situation and the maps to be constructed are shown in the following diagram.
\begin{center}
\makebox{
\xymatrix{
C_{k_1} \ar[r]^{\gamma_{k_2,k_1}}
& C_{k_2} \ar[r]^-{\gamma_{k_3,k_2}}
& C_{k_3}=C' \ar[r]^-{\gamma_{\infty,k_3}}
& B \ar@{^{(}->}[r]
& A \\
& & &
C \ar@{..>}[ulll]^{\widetilde{\alpha}} \ar@{..>}[ul]^{\psi} \ar@{..>}[u]^{\alpha} \ar[ur]_{\varphi}
}}
\end{center}

By \autoref{prop:S02:WSP_lifting_limits}, the morphism $\alpha\colon C\to B=\varinjlim_k C_k$ has an approximate lift, that is, there exist an index $k_1$ and a morphism  $\widetilde{\alpha}\colon C\to C_{k_1}$ such that $\alpha=_{\varepsilon/3}^F\gamma_{\infty,k_1}\circ\widetilde{\alpha}$.
Then $\varphi=_{2\varepsilon/3}^F\gamma_{\infty,k_1}\circ\widetilde{\alpha}$.
Upon going further down in the inductive limit, we can guarantee the properties that we need to check.

In order to guarantee \approxARef{2}, we consider $E\ssubset\ker(\varphi)$.
Since $\varphi=_{2\varepsilon/3}^F\gamma_{\infty,k_1}\circ\widetilde{\alpha}$ and $E\subseteq F$, we have
\[
\gamma_{\infty,k_1}\circ\widetilde{\alpha}=_{2\varepsilon/3}^E\varphi=^E0.
\]
Thus, we may choose $k_2\geq k_1$ such that $\gamma_{k_2,k_1}\circ\widetilde{\alpha}=_{\varepsilon}^E0$.

In order to guarantee \approxARef{3}, we consider $H\ssubset A$.
Since $H\subseteq_\delta B=\varinjlim_k C_k$, we may find $k_3\geq k_2$ such that $H\subseteq_{2\delta}\im(\gamma_{\infty,k_3})$.

Set $C':=C_{k_3}$, $\varphi':=\gamma_{\infty,k_3}$ and $\psi:=\gamma_{k_3,k_1}\circ\widetilde{\alpha}\colon C\to C'=C_{k_3}$.
By construction, \approxARef{1}, \approxARef{2} and \approxARef{3} are satisfied.
\end{proof}

%==========================================================================================
We obtain the following generalization of \cite[Lemma~15.2.2, p.119]{Lor97LiftingSolutions}.

%==========================================================================================
\begin{cor}
\label{prop:S03:C_like_implies_AC}
Let $\mathcal{C}$ be a class of separable, weakly semiprojective \ca{s}.
Then every separable $\mathcal{C}$-like \ca{} is an $A\mathcal{C}$-algebra.
\end{cor}

%==========================================================================================
\begin{rmk}
\label{pargr:S03:ACC_implies_AC-like}
Let $\mathcal{C}$ be a class of separable, weakly semiprojective \ca{s}.
If $\mathcal{C}$ is closed under quotients, then every $A\mathcal{C}$-like \ca{} is also $\mathcal{C}$-like, and similarly every $AA\mathcal{C}$-algebra is $\mathcal{C}$-like.
Then Theorems~\ref{prop:S03:AC-like_implies_AC} and~\ref{prop:S03:AAC_implies_AC} follow from \autoref{prop:S03:C_like_implies_AC}.

In \autoref{sect:S04:trivial_shape} we will consider the class $\mathcal{P}$ of projective \ca{s}.
This class is not closed under quotients.
Indeed, there exist $A\mathcal{P}$-like \ca{s} that are not $\mathcal{P}$-like:
Consider for example the commutative \ca{} $A=C_0([0,1]^2\setminus\{(0,0)\})$, which is contractible and hence $A\mathcal{P}$-like (even an $A\mathcal{P}$-algebra) by \autoref{prop:S04:contr_AP}.
Every sub-\ca{} of $A$ is commutative, and it was shown by Chigogidze and Dranishnikov, \cite{ChiDra10NcAR}, that every commutative projective \ca{} has one\--di\-men\-sional spectrum.
In particular, every commutative projective \ca{} has stable rank one, and if $A$ was approximated by such sub-\ca{s}, then $A$ would have stable rank one as well, which contradicts the fact that the stable rank of $A$ is two.

Therefore, in order to obtain \autoref{prop:S04:Permanence_trivSh} $(2)$ and $(3)$, it is crucial that Theorems~\ref{prop:S03:AC-like_implies_AC} and~\ref{prop:S03:AAC_implies_AC} hold for classes that are not necessarily closed under quotients.
\end{rmk}

%==========================================================================================
\begin{thm}
\label{prop:S03:AAC_implies_AC}
Let $\mathcal{C}$ be a class of separable, weakly semiprojective \ca{s}.
Then every separable $AA\mathcal{C}$-algebra is already an $A\mathcal{C}$-algebra.
\end{thm}
\begin{proof}
Let $A$ be an $AA\mathcal{C}$-algebra.
Choose an inductive system $(A_k,\gamma_k)$ of algebras in $\mathcal{C}$ such that $A\cong\varinjlim_k A_k$.
For each $k$, choose an inductive system $(A_k^{(n)},\varrho_k^{(n)})$ of algebras in $\mathcal{C}$ such that $A_k\cong\varinjlim_n A_k^{(n)}$.
We are given the following situation:
\begin{center}
\makebox{
\xymatrix{
A_k^{(n)} \ar[d]^{\varrho_k^{(n)}}
    & A_{k+1}^{(n)} \ar[d]^{\varrho_{k+1}^{(n)}}
    & A_{k+2}^{(n)} \ar[d]^{\varrho_{k+2}^{(n)}} \\
A_k^{(n+1)} \ar[d]^{\varrho_k^{(\infty,n+1)}}
    & A_{k+1}^{(n+1)} \ar[d]^{\varrho_{k+1}^{(\infty,n+1)}}
    & A_{k+2}^{(n+1)} \ar[d]^{\varrho_{k+2}^{(\infty,n+1)}}  \\
A_k \ar[r]_{\gamma_{k}}
    & A_{k+1} \ar[r]_{\gamma_{k+1}}
    & A_{k+2} \ar[r] \ar[r]_{\gamma_{\infty,k+2}}
    & A \\
}}
\end{center}
We want to apply \autoref{prop:S03:Intertwining} for the collection of \ca{s} $A_k^{(n)}$ together with morphisms $\varphi_{k,n}:=\gamma_{\infty,k}\circ\varrho_k^{(\infty,n)}\colon A_k^{(n)}\to A$, for $k,n\geq 1$.

Assume some indices $k, n$ are given together with $\varepsilon>0$, $F\ssubset A_k^{(n)}$, $E\ssubset\ker(\varphi_{k,n})$ and $H\ssubset A$.
We may assume that $E\subseteq F$.
We need to find $k',n'\geq 1$ and a morphism $\psi\colon A_k^{(n)}\to A_{k'}^{(n')}$ that satisfy \approxARef{1}, \approxARef{2} and \approxARef{3}.

Since $A=\varinjlim_k A_k$ and $\varphi_{k,n}=\gamma_{\infty,k}\circ\varrho^{(\infty,n)}_k=^E0$, there exists some $k'\geq k$ such that $\gamma_{k',k}\circ\varrho^{(\infty,n)}_k=^E_{\varepsilon/2} 0$.
We can also ensure that $H\subseteq_{\varepsilon}\im(\gamma_{\infty,k'})$, by further increasing $k'$, if necessary.

Since $A_k^{(n)}$ is weakly semiprojective, we may lift the morphism
\[
\gamma_{k',k}\circ\varrho_k^{(\infty,n)}\colon A_k^{(n)}\to A_{k'}=\varinjlim_n A_{k'}^{(n)}
\]
to $\alpha\colon A_k^{(n)}\to A_{k'}^{(n_1)}$ (for some $n_1$) such that $\varrho_{k'}^{(\infty,n_1)}\circ\alpha=^F_{\varepsilon/2}\gamma_{k',k}\circ\varrho_k^{(\infty,n)}$.
This is shown in the following diagram.
\begin{center}
\makebox{
\xymatrix{
& A_{k'}^{(n_1)} \ar[d]^{\varrho_{k'}^{(n',n_1)}} \\
A_k^{(n)} \ar[d]_{\varrho_k^{(\infty,n)}} \ar@{..>}[ur]^{\alpha} \ar[r]^{\psi}
& A_{k'}^{(n')} \ar[d]^{\varrho_{k'}^{(\infty,n')}} \\
A_k \ar[r]_{\gamma_{k',k}}
& A_{k'} \ar[r]_{\gamma_{\infty,k'}}
& A \\
}}
\end{center}

We have $\varrho_{k'}^{(\infty,n_1)}\circ\alpha =^E_{\varepsilon/2}\gamma_{k',k}\circ\varrho_k^{(\infty,n)} =^E_{\varepsilon/2} 0$.
As in the proof of \autoref{prop:S03:AC-like_implies_AC}, by going further down in the inductive limit we may find $n'\geq n_1$ such that
\[
\varrho_{k'}^{(n',n_1)}\circ\alpha=^E_{\varepsilon} 0, \quad \text{ and } \quad H\subseteq_\varepsilon\im(\gamma_{\infty,k'}\circ\varrho_{k'}^{(\infty,n')}).
\]
Set $\psi:=\varrho_{k'}^{(n',n_1)}\circ\alpha\colon A_k^{(n)}\to A_{k'}^{(n')}$.
By construction, \approxARef{1}, \approxARef{2} and \approxARef{3} are satisfied.
\end{proof}

%==========================================================================================
\begin{pgr}
Let $B$ be a separable \ca{}, and $\mathcal{C}$ a class of separable \ca{s}.
The above results give us connections between the four conditions that $B$ is $\mathcal{C}$-like, or $A\mathcal{C}$-like, or an $A\mathcal{C}$-algebra, or an $AA\mathcal{C}$-algebra.
This is shown in the diagram below.
A dotted arrow indicates that the implication holds under the additional assumption that the algebras in $\mathcal{C}$ are weakly semiprojective.
The dashed arrow with $(\ast)$ indicates that the implication holds if each quotient of an algebra in $\mathcal{C}$ is an $A\mathcal{C}$-algebra, while the dashed arrow with $(\ast\ast)$ indicates that the implication holds if $\mathcal{C}$ is closed under quotients;
see also \autoref{pargr:S03:ACC_implies_AC-like}.

\begin{center}
\makebox{
\xymatrix{
    & \txt{ $B$ is $A\mathcal{C}$ } \ar@{=>}[dl] \ar@{=>}[dd]
    \ar@{-->}[dr]_{(\ast\ast)} \\
\txt{ $B$ is $AA\mathcal{C}$ } \ar@{-->}[dr]_{(\ast)}
    \ar@/^1.5pc/@{..>}[ur]^{\text{\ref{prop:S03:AAC_implies_AC}}}
    & &  \txt{ $B$ is $\mathcal{C}$-like } \ar@{=>}[dl] \ar@/_1.5pc/@{..>}[ul]_{\text{Loring, see \ref{prop:S03:C_like_implies_AC}}} \\
    & \txt{ $B$ is $A\mathcal{C}$-like } \ar@/^1.5pc/@{..>}[uu]^{\text{\ref{prop:S03:AC-like_implies_AC}}} \\
    \\
}}
\end{center}
\end{pgr}

%==========================================================================================
%==========================================================================================
\section{Trivial shape}
\label{sect:S04:trivial_shape}

%==========================================================================================
In this section we study \ca{s} that are shape equivalent to the zero \ca{}.
Such algebras are said to have \emph{trivial shape}.
In \autoref{prop:S04:TFAE-trivSh}, we show that having trivial shape is equivalent to several other natural conditions, most importantly to being an inductive limit of projective \ca{s}.
One may further obtain that the connecting morphisms in such an inductive limit are surjective, see \autoref{prop:S04:change_connecting_mor_surj}.

In \autoref{prop:S04:Permanence_trivSh}, we prove some natural permanence properties of trivial shape.
However, building on an example of Dadarlat, \cite{Dad14StblyContrNotContr}, we show that trivial shape does not necessarily pass to full hereditary sub-\ca{s};
see \autoref{pargr:S04:exmpl_Dadarlat}.
It follows that also projectivity does not pass to full hereditary sub-\ca{s};
see \autoref{prop:S04:Projectivity_not_hereditary}.

We denote the zero \ca{} by $0$.
Note that $A\precsim_{Sh}0$ implies $A\sim_{Sh}0$, that is, $A$ is shape dominated by $0$ if and only if it is shape equivalent to $0$.
The following result of Loring and Shulman was the inspiration for the main result \autoref{prop:S04:TFAE-trivSh} below.
For the definition of the generating rank, $\gen(A)$, see \autoref{pargr:S02:Generators}.
The \emph{cone} of a \ca{} $A$, denoted $\cone{A}$, is defined as $\cone{A}:=C_0((9,1])\otimes A$.

%==========================================================================================
\begin{thm}[{Loring, Shulman, \cite[Theorem~7.4]{LorShu12NCSemialgLifting}}]
\label{prop:S04:LorShu:Cone_as_limit}
Let $A$ be a separable \ca{}.
Then $\cone{A}\cong \varinjlim_k P_k$, for an inductive system of projective \ca{s} $P_k$ satisfying $\gen(P_k)\leq\gen(A)+1$, with surjective connecting morphisms $P_k\to P_{k+1}$.
\end{thm}

%==========================================================================================
\begin{lma}
\label{prop:S04:lma:proj_map_null-homotopic}
Let $\varphi\colon A\to B$ be a projective morphism.
Then $\varphi\simeq 0$.
\end{lma}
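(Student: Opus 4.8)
The statement is that a projective morphism $\varphi\colon A\to B$ is homotopic to $0$. The plan is to exploit the universal mapping property of projectivity against the surjection from the cone $CB = C_0((0,1])\otimes B$ onto $B$. Recall that the evaluation at $1$, $\ev_1\colon CB\to B$, $f\otimes b\mapsto f(1)b$, is a surjective morphism whose kernel is the suspension $SB = C_0((0,1))\otimes B$; thus $B\cong CB/SB$ exhibits $B$ as a quotient. Since $CB$ is contractible, any morphism factoring through $CB$ is null-homotopic, so it suffices to lift $\varphi$ through $\ev_1$.

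First I would set $C := CB$ and $J := SB$, so that $C/J \cong B$ via $\ev_1$, and apply the definition of projectivity of $\varphi$ (paragraph \ref{pargr:S02:wSP}) to the morphism $\sigma := \id_B\colon B\to C/J = B$. Projectivity yields a morphism $\psi\colon A\to CB$ with $\ev_1\circ\psi = \sigma\circ\varphi = \varphi$. Second, I would recall that $CB$ is contractible: the homotopy $h_t\colon CB\to CB$ defined by $(h_t f)(s) = f(ts)$ (for $t\in[0,1]$, suitably interpreted as a point-norm continuous path of morphisms) connects $\id_{CB}$ to the zero morphism. Composing, $h_t\circ\psi\colon A\to CB$ is a homotopy from $\psi$ to $0$, hence $\ev_1\circ h_t\circ\psi$ is a homotopy from $\ev_1\circ\psi = \varphi$ to $\ev_1\circ 0 = 0$. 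This gives $\varphi\simeq 0$.

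The only point requiring a little care — and the closest thing to an obstacle — is checking that the contracting homotopy of $CB$ is genuinely continuous in the point-norm topology and is basepoint-preserving, i.e. that each $h_t$ is an honest $\ast$-homomorphism $CB\to CB$ and that $t\mapsto h_t(x)$ is norm-continuous for each $x\in CB$; this is standard (it is the statement that the cone over any \Cs{} is contractible) and can be cited or dispatched in one line. Everything else is a direct application of the definition of projectivity together with the factorization $B\cong CB/SB$.
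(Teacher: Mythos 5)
Your proof is correct and follows essentially the same route as the paper: lift $\varphi$ through the surjection $\ev_1\colon CB\to B$ using projectivity, then use the contractibility of the cone $CB$ to conclude $\varphi\simeq 0$. The extra care you take with the point-norm continuity of the contracting homotopy is fine but standard, as you note.
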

\begin{proof}
This is a variant of the standard argument for showing that a projective \ca{} is contractible.
We include it for completeness.
For the cone $\cone{B}=C_0((0,1],B)$, let $\ev_1\colon\cone{B}\to B$ be the evaluation morphism at $1$.
The projectivity of $\varphi$ gives a lift $\psi\colon A\to \cone{B}$ such that $\ev_1\circ\psi=\varphi$.
We have $\id_{\cone{B}}\simeq 0$ since $\cone{B}$ is contractible.
Then $\varphi=\ev_1\circ\id_{\cone{B}}\circ\psi\simeq 0$, as desired.
\end{proof}

%==========================================================================================
\begin{lma}
\label{prop:S04:lma:shape_subsystem_null-homotopic}
Let $(A_k,\gamma_k)$ be a shape system with inductive limit $A:=\varinjlim A_k$.
Assume that every semiprojective morphism $D\to A$ (from any \ca{} $D$) is null-homotopic.
Then for each $k$ there exists $k'\geq k$ such that $\gamma_{k',k}\simeq 0$.
\end{lma}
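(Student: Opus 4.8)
The plan is to exploit the fact that a shape system is built from \emph{semiprojective} connecting morphisms, and then to feed each $\gamma_k$ into the lifting statement for semiprojective morphisms into inductive limits, \ref{prop:S02:SP_lifting_limits}(1), applied to the very system $(A_k,\gamma_k)$ whose limit is $A$. Fix $k$. I would consider the morphism $\gamma_k\colon A_k\to A_{k+1}$, which is semiprojective by the definition of a shape system (\ref{pargr:S02:shape_system}), and the morphism $\gamma_{\infty,k+1}\colon A_{k+1}\to A$ into the limit of $(A_n,\gamma_n)$. By hypothesis, since $\gamma_{\infty,k+1}$ is a morphism into $A$ from a \Cs{} admitting a semiprojective presentation — more precisely, the composite $\gamma_{\infty,k+1}\circ(\text{nothing needed})$; rather, the relevant semiprojective morphism to which we apply the hypothesis will be produced below — we want to conclude $\gamma_{k',k}\simeq 0$ for some $k'\ge k$.

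Here is the cleaner route. Apply \ref{prop:S02:SP_lifting_limits}(1) to the semiprojective morphism $\varphi:=\gamma_k\colon A_k\to A_{k+1}$ and the inductive system $(A_n,\gamma_n)_{n\ge k+1}$ with limit $A$, taking $\sigma:=\gamma_{\infty,k+1}\colon A_{k+1}\to A$. This yields, for some $m\ge k+1$, a morphism $\psi_m\colon A_k\to A_m$ with $\gamma_{\infty,m}\circ\psi_m\simeq\sigma\circ\varphi=\gamma_{\infty,k+1}\circ\gamma_k=\gamma_{\infty,k}$. The point of this step is that $\psi_m$ is a morphism into $A_m$, and the hypothesis about semiprojective morphisms into $A$ is not yet what we use — instead we observe that $\gamma_{\infty,m}\circ\psi_m\colon A_k\to A$ is, up to homotopy, $\gamma_{\infty,k}$, and that $\gamma_{\infty,k}$ factors through the semiprojective morphism $\gamma_{m-1}\circ\cdots\circ\gamma_k$ composed with $\gamma_{\infty,m}$. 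Actually the decisive observation is that $\gamma_{\infty,k}=\gamma_{\infty,k+1}\circ\gamma_k$ and $\gamma_k$ is semiprojective, so $\gamma_{\infty,k}$ is a semiprojective morphism into $A$ (a composite of a semiprojective morphism with any morphism is semiprojective; alternatively, invoke the hypothesis directly for the domain $A_k$ with its identity as the semiprojective morphism only if $A_k$ is itself semiprojective, which it need not be — so one really wants the ``semiprojective \emph{morphism}'' version). By hypothesis $\gamma_{\infty,k}\simeq 0$. Then \ref{prop:S02:SP_lifting_limits}(2) applied to the two morphisms $\gamma_k\colon A_{k}\to A_{k+1}$ composed appropriately — more precisely, to $\sigma_1:=\gamma_k$ and $\sigma_2:=0$ as morphisms $A_k\to A_{k+1}$, which satisfy $\gamma_{\infty,k+1}\circ\gamma_k=\gamma_{\infty,k}\simeq 0=\gamma_{\infty,k+1}\circ 0$ — gives $k'\ge k+1$ with $\gamma_{k',k+1}\circ\gamma_k\circ\gamma_{k-1\text{-free}}$... i.e.\ $\gamma_{k',k+1}\circ\gamma_k\simeq\gamma_{k',k+1}\circ 0=0$, that is, $\gamma_{k',k}\simeq 0$, as desired.

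There is a subtlety in invoking \ref{prop:S02:SP_lifting_limits}(2): that statement takes morphisms $\sigma_1,\sigma_2\colon B\to C_k$ and produces homotopy of $\gamma_{n,k}\circ\sigma_i\circ\varphi$ after precomposition with the semiprojective $\varphi\colon A\to B$. So I would apply it with the semiprojective morphism being $\gamma_k\colon A_k\to A_{k+1}$ in the role of ``$\varphi$'', with ``$B$''$=A_{k+1}$, ``$C_k$''$=A_{k+1}$, ``$\sigma_1$''$=\id_{A_{k+1}}$, ``$\sigma_2$''$=0$; the hypothesis $\gamma_{\infty,k+1}\circ\id\simeq\gamma_{\infty,k+1}\circ 0$ is exactly $\gamma_{\infty,k+1}\simeq 0$, which follows from the hypothesis of the lemma since $\gamma_{\infty,k+1}\circ\gamma_k=\gamma_{\infty,k}$ — wait, we need $\gamma_{\infty,k+1}$ itself null-homotopic, not $\gamma_{\infty,k}$. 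Here is where the hypothesis is used in full strength: $\gamma_{\infty,k+1}\colon A_{k+1}\to A$ is itself of the form (semiprojective morphism)$\,\circ$(something) only if we go one step further, i.e.\ $\gamma_{\infty,k+1}=\gamma_{\infty,k+2}\circ\gamma_{k+1}$ with $\gamma_{k+1}$ semiprojective, so $\gamma_{\infty,k+1}$ is a semiprojective morphism into $A$, hence null-homotopic by hypothesis. Then \ref{prop:S02:SP_lifting_limits}(2) with the semiprojective $\gamma_k$ as ``$\varphi$'' yields $n\ge k+1$ with $\gamma_{n,k+1}\circ\gamma_k\simeq\gamma_{n,k+1}\circ 0\circ\gamma_k=0$, i.e.\ $\gamma_{n,k}\simeq 0$; set $k'=n$. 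The main obstacle is purely bookkeeping: matching the indices and the roles of the algebras/morphisms in \ref{prop:S02:SP_lifting_limits}(2) correctly, and confirming that ``a composite of a semiprojective morphism with an arbitrary morphism into an inductive limit'' is covered by the cited theorem — which it is, since the theorem only requires the \emph{source} morphism $\varphi$ to be semiprojective.
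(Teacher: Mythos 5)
Despite the false starts in the write-up, your final argument is correct and is essentially the paper's own proof: you show $\gamma_{\infty,k+1}\simeq 0$ by noting it factors through the semiprojective connecting morphism $\gamma_{k+1}$ (so the hypothesis applies), and then use \ref{prop:S02:SP_lifting_limits}(2) with the semiprojective $\gamma_k$ precomposed to push that null-homotopy down to a finite stage $\gamma_{k',k}\simeq 0$. The paper's only cosmetic difference is that it takes $\sigma_1=\gamma_{k+2,k+1}$ and $\sigma_2=0$ as morphisms $A_{k+1}\to A_{k+2}$ rather than your $\id_{A_{k+1}}$ and $0$.
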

\begin{proof}
The morphisms to be considered are shown in the following diagram:
\begin{center}
\makebox{
\xymatrix@+=5pt{
A_{k} \ar[r]^{\gamma_{k+1,k}}
    & A_{k+1} \ar@<2pt>[d]^{\sigma_2} \ar@<-2pt>[d]_{\sigma_1}
    \\
& A_{k+2} \ar[r]_{\gamma_{k',k+2}}
    & A_{k'} \ar[r]_{\gamma_{\infty,k'}}
    & A
}}
\end{center}

Note that $\gamma_{k+1,k}$ is semiprojective.
Define two morphisms $\sigma_1,\sigma_2\colon A_{k+1}\to A_{k+2}$ as $\sigma_1=\gamma_{k+2,k+1}$ and $\sigma_2=0$.
The morphism $\gamma_{\infty,k+2}\circ\sigma_1=\gamma_{\infty,k+1}$ is semiprojective, and therefore null-homotopic by assumption.
Thus $\gamma_{\infty,k+2}\circ\sigma_1\simeq 0=\gamma_{\infty,k+2}\circ\sigma_2$.

Using the semiprojectivity of $\gamma_{k+1,k}$, we may deduce from \cite[3.2]{EffKam86ShapeThy}, see \autoref{prop:S02:SP_lifting_limits}, that there exists $k'\geq k+2$ such that $\gamma_{k',k}=\gamma_{k',k+2}\circ\sigma_1\circ\gamma_{k+1,k} \simeq\gamma_{k',k+2}\circ\sigma_2\circ\gamma_{k+1,k}=0$.
\end{proof}

%==========================================================================================
\begin{thm}
\label{prop:S04:TFAE-trivSh}
Let $A$ be a separable \ca{}.
Then the following are equivalent:
\begin{enumerate}
\item \label{prop:S04:TFAE-trivSh:conda}
$A$ has trivial shape, that is, $A\sim_{Sh}0$.
\item \label{prop:S04:TFAE-trivSh:condb}
Every semiprojective morphism $D\to A$ (from any \ca{} $D$) is null-homotopic.
\item \label{prop:S04:TFAE-trivSh:condc}
$A$ is an inductive limit of a system with projective connecting morphisms.
% $A_k\to A_{k+1}$.
\item \label{prop:S04:TFAE-trivSh:condd}
$A$ is an inductive limit of a system with null-homotopic connecting morphisms.
% $A_k\to A_{k+1}$.
\item \label{prop:S04:TFAE-trivSh:conde}
$A$ is an inductive limit of finitely generated, projective \ca{s} with generating rank at most $\gen(A)+1$.
\item \label{prop:S04:TFAE-trivSh:condf}
$A$ is an inductive limit of finitely generated cones.
\item \label{prop:S04:TFAE-trivSh:condg}
$A$ is an inductive limit of contractible \ca{s}.
\end{enumerate}
%Moreover, in statements \myRef{prop:S04:TFAE-trivSh:condc}-\myRef{prop:S04:TFAE-trivSh:condg}, if $A$ is an inductive limit, $\varinjlim A_k$, then we may further assume that $\gen(A_k)\leq\gen(A)+1$.
\end{thm}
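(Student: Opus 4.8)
The plan is to run a single cycle of implications through all seven conditions, keeping track of the generating rank throughout. All but two of the arrows are essentially formal, so I would begin with those. For $\ref{prop:S04:TFAE-trivSh:conde}\Rightarrow\ref{prop:S04:TFAE-trivSh:condc}$: every morphism with projective domain is itself a projective morphism (lift along the given quotient using projectivity of the domain), so the connecting morphisms of a decomposition into finitely generated projectives are projective. Then $\ref{prop:S04:TFAE-trivSh:condc}\Rightarrow\ref{prop:S04:TFAE-trivSh:condd}$ is \ref{prop:S04:lma:proj_map_null-homotopic}, while $\ref{prop:S04:TFAE-trivSh:conde}\Rightarrow\ref{prop:S04:TFAE-trivSh:condg}$ and $\ref{prop:S04:TFAE-trivSh:condf}\Rightarrow\ref{prop:S04:TFAE-trivSh:condg}$ hold because a projective \Cs{} is contractible (apply \ref{prop:S04:lma:proj_map_null-homotopic} to its identity) and a cone is contractible. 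The arrows $\ref{prop:S04:TFAE-trivSh:condd}\Rightarrow\ref{prop:S04:TFAE-trivSh:condb}$, $\ref{prop:S04:TFAE-trivSh:condg}\Rightarrow\ref{prop:S04:TFAE-trivSh:condb}$ and $\ref{prop:S04:TFAE-trivSh:conda}\Rightarrow\ref{prop:S04:TFAE-trivSh:condb}$ all use part (1) of \ref{prop:S02:SP_lifting_limits}: a semiprojective morphism $\sigma\colon D\to A=\varinjlim A_k$ is homotopic to $\gamma_{\infty,k}\circ\psi_k$ for some $\psi_k\colon D\to A_k$ with $k$ large, and this composite is null-homotopic because $\psi_k\simeq 0$ when $A_k$ is contractible, because $\gamma_{\infty,k}=\gamma_{\infty,k+1}\circ\gamma_k\simeq 0$ when $\gamma_k\simeq 0$, and because a shape system witnessing $A\sim_{Sh}0$ acquires, after passing to the cofinal subsystem singled out by the shape equivalence, null-homotopic connecting morphisms (they factor through the zero algebra). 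Finally, $\ref{prop:S04:TFAE-trivSh:condb}\Rightarrow\ref{prop:S04:TFAE-trivSh:conda}$ follows from \ref{prop:S04:lma:shape_subsystem_null-homotopic}: on the subsystem it provides, all connecting morphisms are null-homotopic, and the zero morphisms into and out of the zero algebra furnish a shape equivalence with the trivial system.

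The substance lies in $\ref{prop:S04:TFAE-trivSh:condb}\Rightarrow\ref{prop:S04:TFAE-trivSh:condf}\Rightarrow\ref{prop:S04:TFAE-trivSh:conde}$. For the first implication I would take a finitely generated shape system $(A_k,\gamma_k)$ of $A$ (see \ref{pargr:S02:shape_system}), which -- tracking generators through Blackadar's construction -- may be taken with $\gen(A_k)\leq\gen(A)$; then \ref{prop:S04:lma:shape_subsystem_null-homotopic} lets me assume, after passing to a subsequence, that each $\gamma_k$ is null-homotopic, and a null-homotopy of $\gamma_k$ amounts to a factorization $\gamma_k=\ev_1\circ\mu_k$ with $\mu_k\colon A_k\to\cone{A_{k+1}}$. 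The standard interleaving argument applied to the ladder
\[
\cone{A_1}\xrightarrow{\ev_1}A_1\xrightarrow{\mu_1}\cone{A_2}\xrightarrow{\ev_1}A_2\xrightarrow{\mu_2}\cone{A_3}\longrightarrow\cdots
\]
then identifies $A=\varinjlim A_k$ with $\varinjlim(\cone{A_k},\mu_k\circ\ev_1)$, an inductive limit of cones over finitely generated \Cs{s}; moreover $\gen(\cone{A_k})\leq\gen(A_k)\leq\gen(A)$, since $\cone{A_k}=C_0((0,1])\otimes A_k$ is generated by the elements $h\otimes b$ as $b$ runs over self-adjoint generators of $A_k$ and $h$ is a generator of $C_0((0,1])$. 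For $\ref{prop:S04:TFAE-trivSh:condf}\Rightarrow\ref{prop:S04:TFAE-trivSh:conde}$ I would apply the Loring--Shulman theorem \ref{prop:S04:LorShu:Cone_as_limit} to each cone, writing $\cone{A_k}\cong\varinjlim_n P_k^n$ with $P_k^n$ projective and $\gen(P_k^n)\leq\gen(A_k)+1\leq\gen(A)+1$, so that $A$ is an $AA\mathcal{C}$-algebra for $\mathcal{C}$ the class of finitely generated projective \Cs{s} of generating rank at most $\gen(A)+1$. Since projective \Cs{s} are semiprojective, hence weakly semiprojective, \ref{prop:S03:AAC_implies_AC} converts this into an $A\mathcal{C}$-decomposition, which is precisely \ref{prop:S04:TFAE-trivSh:conde} together with the asserted estimate; the estimate for the decompositions in \ref{prop:S04:TFAE-trivSh:condc}, \ref{prop:S04:TFAE-trivSh:condd}, \ref{prop:S04:TFAE-trivSh:condf} and \ref{prop:S04:TFAE-trivSh:condg} is then read off these same constructions.

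The step I expect to be the main obstacle is $\ref{prop:S04:TFAE-trivSh:condf}\Rightarrow\ref{prop:S04:TFAE-trivSh:conde}$: an inductive limit of inductive limits is in general \emph{not} an inductive limit of the innermost building blocks (see \ref{exmpl:S03:Counterexmpl_limit_of_limit}), so one cannot naively diagonalise the double limit, and the passage works only because a finitely generated cone decomposes into \emph{weakly semiprojective} pieces, which is what makes \ref{prop:S03:AAC_implies_AC} applicable. This is the reason for isolating condition \ref{prop:S04:TFAE-trivSh:condf} -- limits of finitely generated cones, rather than of arbitrary contractible \Cs{s} -- as an intermediate step, and it is also where the extra generator in the generating rank estimate arises, namely as the cost of the Loring--Shulman decomposition of a cone.
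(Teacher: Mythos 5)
Your argument follows essentially the same route as the paper's: the only two non-formal implications are \myRef{prop:S04:TFAE-trivSh:condb}$\Rightarrow$\myRef{prop:S04:TFAE-trivSh:condf}, done by passing to a finitely generated shape system, invoking \ref{prop:S04:lma:shape_subsystem_null-homotopic} and interleaving with the cones $\cone{A_k}$ via $\ev_1$ and the null-homotopies, and \myRef{prop:S04:TFAE-trivSh:condf}$\Rightarrow$\myRef{prop:S04:TFAE-trivSh:conde}, done by Loring--Shulman \ref{prop:S04:LorShu:Cone_as_limit} followed by the $AA\mathcal{C}\Rightarrow A\mathcal{C}$ criterion \ref{prop:S03:AAC_implies_AC} -- exactly as in the paper. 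Your bookkeeping of the remaining arrows differs only cosmetically (you prove \myRef{prop:S04:TFAE-trivSh:condb}$\Rightarrow$\myRef{prop:S04:TFAE-trivSh:conda} directly from the lemma, which lets you avoid the appeal to \cite[Theorem 4.8]{Bla1985} that the paper needs for \myRef{prop:S04:TFAE-trivSh:condd}$\Rightarrow$\myRef{prop:S04:TFAE-trivSh:conda}, since a cofinal subsystem of a shape system is again a shape system; and you make \impliesStep{\myRef{prop:S04:TFAE-trivSh:conde}}{\myRef{prop:S04:TFAE-trivSh:condc}} explicit where the paper calls it clear).

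One concrete error: your claim that $\gen(\cone{A_k})\leq\gen(A_k)$ because $\cone{A_k}=C_0((0,1])\otimes A_k$ is generated by the elements $h\otimes b$ with $b$ running over a self-adjoint generating set of $A_k$ is false. Take $A_k=\CC^2=C(\{1,2\})$ with the single generator $b=(1,2)$; then $h\otimes b$ is the function $(t,j)\mapsto t\,b(j)$ on $(0,1]\times\{1,2\}$, which takes the same value $1$ at the points $(1,1)$ and $(\tfrac12,2)$, so it fails to separate points and generates a proper subalgebra of $\cone{\CC^2}$. The correct general statement, which the paper cites as \cite[Lemma 7.1]{LorShu2010}, is $\gen(\cone{A})\leq\gen(A)+1$; this weaker bound still yields $\gen(\cone{A_k})\leq\gen(A)+1$ for the decomposition in \myRef{prop:S04:TFAE-trivSh:condf} and does not affect your bound $\gen(P_k^n)\leq\gen(A_k)+1\leq\gen(A)+1$ in \myRef{prop:S04:TFAE-trivSh:conde}, since there the $+1$ already enters through the Loring--Shulman theorem applied to $A_k$ itself. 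So the proof is repaired simply by replacing your generating-set argument with the citation.
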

\begin{proof}
Note that $0$ has a natural shape system consisting of the zero \ca{} at each step.
Therefore, $A\sim_{Sh}0$ means that there exists a shape system $(A_k,\gamma_k)$ for $A$ and morphisms $\alpha_k\colon A_k\to 0$ and $\beta_k\colon 0\to A_{k+1}$ such that $\beta_{k+1}\circ\alpha_k\simeq\gamma_k$.
This is shown in the following diagram, which homotopy commutes:
\begin{center} \makebox{
\xymatrix{
A_1 \ar[rr]^{\gamma_1} \ar[dr]_{\alpha_1}
& & A_2 \ar[rr]^{\gamma_2} \ar[dr]_{\alpha_2}
& & A_3 \ar[rr] \ar[dr]_{\alpha_3}
& & \ldots \ar[r]
& A
\\
& 0 \ar[rr] \ar[ur]^{\beta_1}
& & 0 \ar[rr] \ar[ur]^{\beta_2}
& & 0 \ar[r]
& 0 \ar[r]
& 0
\\
} }
\end{center}

`(1)$\Rightarrow$(4)':
Assume that $A\sim_{Sh}0$.
We have just noted that this implies that $A$ has a shape system $(A_k)_k$ with connecting morphisms $\gamma_k\colon A_k\to A_{k+1}$ that are null-homotopic since they factor through $0$ up to homotopy.

`(4)$\Rightarrow$(1)':
Assume that there is an inductive system $\mathcal{A}=(A_k,\gamma_k)$ with $A\cong\varinjlim\mathcal{A}$ and null-homotopic connecting morphisms $\gamma_k$.
Let $\alpha_k\colon A_k\to 0$ and $\beta_k\colon 0\to A_{k+1}$ be the zero morphisms.
Then $\beta_{k+1}\circ\alpha_k=0\simeq\gamma_k$.
Conversely also $\beta_k\circ\alpha_k=0$, so that the inductive systems $\mathcal{A}$ and $(0\to 0\to\ldots)$ are shape equivalent.
This does not show that $A\sim_{Sh}0$ right away since the inductive system $\mathcal{A}$ need not be a shape system.
However, by \cite[Theorem~4.8]{Bla85ShapeThy}, whenever two inductive systems are shape equivalent, then their inductive limit \ca{s} are shape equivalent.

`(4)$\Rightarrow$(2)':
Assume that $A\cong\varinjlim A_k$, for an inductive system with null-homotopic connecting morphisms $\gamma_k\colon A_k\to A_{k+1}$.
%Assume that there is an inductive system $\mathcal{A}=(A_k,\gamma_k)$ with $A\cong\varinjlim\mathcal{A}$ and null-homotopic connecting morphisms $\gamma_k$.
Let $\varphi\colon D\to A$ be a semiprojective morphism.
By \cite[Theorem~3.1]{Bla85ShapeThy}, see \autoref{prop:S02:SP_lifting_limits}, there exists $k$ and a morphism $\psi\colon D\to A_k$ such that $\varphi\simeq\gamma_{\infty,k}\circ\psi$.
Since $\gamma_{\infty,k}=\gamma_{\infty,k+1}\circ\gamma_{k}$ and $\gamma_k\simeq 0$, we obtain that $\varphi\simeq 0$.

`(2)$\Rightarrow$(5),(6)':
By \cite[Theorem 4.3]{Bla85ShapeThy}, see \autoref{pargr:S02:shape_system}, $A$ has a shape system $(A_k,\gamma_k)$ with finitely generated algebras $A_k$ and such that $\gen(A_k)\leq\gen(A)$.
We may apply \autoref{prop:S04:lma:shape_subsystem_null-homotopic} inductively to this shape system, and after passing to a suitable subsystem we see that there exists a shape system $(A_k,\gamma_k)$ of finitely generated \ca{s} $A_k$ with $\gen(A_k)\leq\gen(A)$ and null-homotopic connecting morphisms $\gamma_k$ such that $A\cong\varinjlim\mathcal{A}$.

A homotopy $\gamma_k\simeq 0$ induces a natural morphism $\Gamma_k\colon A_k\to \cone{A_{k+1}}$ such that $\gamma_k$ has a factorization $\gamma_k=\ev_1\circ\Gamma_k$, where $\cone{A_{k+1}}$ is the cone over $A_{k+1}$ and $\ev_1$ is evaluation at $1$.
Set $\omega_k:=\Gamma_k\circ\ev_1\colon \cone{A_k}\to \cone{A_{k+1}}$.
Consider the inductive system $\mathcal{B}=(\cone{A_k},\omega_k)$.
The systems $\mathcal{A}$ and $\mathcal{B}$ are intertwined, which implies that their inductive limits are isomorphic.
It follows from \cite[Lemma~7.1]{LorShu12NCSemialgLifting} that $\gen(\cone{A_k})\leq\gen(A_k)+1$.
%, so that $\cone{A_k}$ is finitely generated and $\gen(\cone{A_k})\leq\gen(A)+1$.
Thus, $A$ is isomorphic to an inductive limit of the finitely generated cones $\cone{A_k}$, which verifies~(6).
The intertwining is shown in the following commutative diagram.
\begin{center}
\makebox{
\xymatrix{
& A_1 \ar[dr]_{\Gamma_1} \ar[rr]^{\gamma_1}
    & & A_{2} \ar[dr]_{\Gamma_2} \ar[rr]
    & & \ldots \ar[r]
    & \varinjlim_k A_k \ar@<2pt>[d]^{\cong} \\
CA_1 \ar[ur]^{\ev_{1}} \ar[rr]_{\omega_1}
    & & CA_2 \ar[ur]^{\ev_{1}} \ar[rr]_{\omega_2}
    & & CA_3 \ar[r]
    & \ldots \ar[r]
    & \varinjlim_k \cone{A_k} \ar@<2pt>[u]^{\cong} \\
}}
\end{center}

By the result of Loring and Shulmann, \cite[Theorem~7.4]{LorShu12NCSemialgLifting}, see \autoref{prop:S04:LorShu:Cone_as_limit}, for each $k$, the cone $\cone{A_k}$ can be written as an inductive limit of finitely generated projective \ca{s} with generating rank at most $\gen(A_k)+1$.
Note that $\gen(A_k)+1\leq\gen(A)+1$ for all $k$.
It follows from \autoref{prop:S03:AAC_implies_AC} that $A$ is isomorphic to an inductive limit of finitely generated, projective \ca{s} with generator rank at most $\gen(A)+1$.

The implications `(5)$\Rightarrow$(3)', `(5)$\Rightarrow$(7)', and `(7)$\Rightarrow$(4)' are clear.
The implication `(3)$\Rightarrow$(4)' follows from \autoref{prop:S04:lma:proj_map_null-homotopic}.
\end{proof}

%==========================================================================================
\begin{cor}
\label{prop:S04:contr_AP}
Every separable, contractible \ca{} is an inductive limit of separable, projective \ca{s}.
\end{cor}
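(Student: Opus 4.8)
The plan is to derive this at once from the equivalences established in \ref{prop:S04:TFAE-trivSh}. Recall that a \Cs{} $A$ is contractible precisely when $\id_A\simeq 0$, i.e., when $A$ is homotopy equivalent to the zero \Cs{} $0$. Since shape equivalence is coarser than homotopy equivalence (see \ref{pargr:S02:shape_system}), any contractible $A$ satisfies $A\sim_{Sh}0$, which is condition \myRef{prop:S04:TFAE-trivSh:conda} of \ref{prop:S04:TFAE-trivSh}.

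By the implication \impliesStep{\myRef{prop:S04:TFAE-trivSh:conda}}{\myRef{prop:S04:TFAE-trivSh:conde}} (equivalently \myRef{prop:S04:TFAE-trivSh:condc}) it then follows immediately that $A$ is an inductive limit of (finitely generated) projective \Cs{s}, and one may even arrange $\gen(A_k)\le\gen(A)+1$ in such a decomposition by the last sentence of \ref{prop:S04:TFAE-trivSh}. If one wishes, one can additionally invoke \ref{prop:S04:change_connecting_mor_surj} to ensure the connecting morphisms are surjective, recovering the sharper form announced in the introduction.

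I expect no real obstacle here: the entire content of the corollary has been front-loaded into \ref{prop:S04:TFAE-trivSh}. The only thing to be checked is the entirely standard fact that contractibility of a \Cs{} coincides with being homotopy equivalent to $0$ (so that the coarsening ''homotopy $\Rightarrow$ shape'' applies), after which the two-step argument above closes the proof.
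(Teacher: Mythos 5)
Your proposal is correct and follows exactly the route the paper intends: a contractible $A$ is homotopy equivalent to $0$, hence $A\sim_{Sh}0$ since shape is coarser than homotopy, and then the implication \impliesStep{\myRef{prop:S04:TFAE-trivSh:conda}}{\myRef{prop:S04:TFAE-trivSh:conde}} of \ref{prop:S04:TFAE-trivSh} gives the conclusion. The paper states the corollary without a written proof precisely because this two-step deduction is immediate.
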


%==========================================================================================
\begin{thm}
\label{prop:S04:Permanence_trivSh}
The class of separable \ca{s} with trivial shape is closed under:
\begin{enumerate}
\item
countable direct sums;
\item
sequential inductive limits;
\item
approximation by sub-\ca{s} (that is, `likeness' as in \autoref{defn:S03:P-like});
\item
taking maximal tensor products with \emph{any} other separable \ca{}.
\end{enumerate}
\end{thm}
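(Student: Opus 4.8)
The plan is to establish each of the four closure properties by reducing to the characterization in \ref{prop:S04:TFAE-trivSh}, using whichever equivalent formulation is most convenient for the operation at hand. The general strategy is: a \Cs{} has trivial shape if and only if it is an inductive limit of contractible \Cs{s} (condition \myRef{prop:S04:TFAE-trivSh:condg}), equivalently an inductive limit of projective \Cs{s}; so for each operation I would show that it either produces, or is approximated by, inductive limits of contractible/projective algebras, and then invoke \ref{prop:S04:TFAE-trivSh} again to conclude trivial shape.

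\textit{(1) Countable direct sums.} Given $A^{(1)}, A^{(2)}, \ldots$ each with trivial shape, write each $A^{(m)} \cong \varinjlim_k A^{(m)}_k$ with the $A^{(m)}_k$ contractible (or projective). A countable direct sum of contractible \Cs{s} is contractible (contract each summand simultaneously using the coordinatewise homotopies, noting that a bounded family of homotopies assembles to a homotopy on the $c_0$-direct sum). Then $\bigoplus_m A^{(m)}$ is the inductive limit of the system whose $n$-th term is $\bigoplus_{m \le n} A^{(m)}_n$ (a diagonal-type exhaustion), each term being a finite direct sum of contractible algebras, hence contractible. By \ref{prop:S04:TFAE-trivSh}, $\bigoplus_m A^{(m)}$ has trivial shape.

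\textit{(2) Inductive limits.} If $A \cong \varinjlim_k A_k$ with each $A_k$ of trivial shape, then each $A_k$ is an $A\mathcal{P}$-algebra (inductive limit of projectives) by \ref{prop:S04:TFAE-trivSh}, so $A$ is an $AA\mathcal{P}$-algebra. Since the class $\mathcal{P}$ of projective \Cs{s} consists of weakly semiprojective \Cs{s} (projective implies semiprojective implies weakly semiprojective), \ref{prop:S03:AAC_implies_AC} applies and shows $A$ is an $A\mathcal{P}$-algebra, hence has trivial shape by \ref{prop:S04:TFAE-trivSh}.

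\textit{(3) Approximation by sub-\Cs{s}.} If $A$ is $\mathcal{P}$-like for $\mathcal{P}$ the class of trivial-shape algebras, then since every trivial-shape algebra is itself an $A\mathcal{P}_0$-algebra for $\mathcal{P}_0$ the class of projectives, $A$ is $A\mathcal{P}_0$-like. Again $\mathcal{P}_0$ consists of weakly semiprojective algebras, so \ref{prop:S03:AC-like_implies_AC} gives that $A$ is an $A\mathcal{P}_0$-algebra, hence has trivial shape. (Here one uses that a sub-\Cs{} of $A$ with trivial shape is being approximated, and trivial-shape algebras are in particular $\mathcal{P}_0$-like, so the composite approximation is still by projective sub-\Cs{s}.)

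\textit{(4) Maximal tensor products.} If $A$ has trivial shape, write $A \cong \varinjlim_k A_k$ with each $A_k$ contractible. For any separable \Cs{} $B$, the functor $- \otimesMax B$ is continuous with respect to inductive limits, so $A \otimesMax B \cong \varinjlim_k (A_k \otimesMax B)$. Now $A_k \otimesMax B$ is contractible: a contracting homotopy $h_t \colon A_k \to A_k$ (with $h_0 = 0$, $h_1 = \id$, i.e.\ a morphism $A_k \to C A_k$ splitting $\ev_1$) tensors with $\id_B$ to give a morphism $A_k \otimesMax B \to (CA_k) \otimesMax B \cong C(A_k \otimesMax B)$ splitting $\ev_1$, using that the cone functor is just tensoring by $C_0((0,1])$ and the maximal tensor product is associative and commutes with such tensor factors. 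Hence $A \otimesMax B$ is an inductive limit of contractible \Cs{s}, so has trivial shape by \ref{prop:S04:TFAE-trivSh}.

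The main obstacle I anticipate is item (4), specifically the verification that contractibility of $A_k$ survives the maximal tensor product — one must be careful that the homotopy, encoded as a $\ast$-homomorphism into a cone, really does tensor correctly, since the maximal tensor product is not always well-behaved under arbitrary maps; the cleanest route is to phrase contractibility as the existence of a $\ast$-homomorphic section of $\ev_1 \colon CA_k \to A_k$ and observe that $-\otimesMax B$ is a functor that commutes with the tensor factor $C_0((0,1])$, so it sends such a section to the required section for $A_k \otimesMax B$. A secondary subtlety is, in (1), checking that a $c_0$-direct sum of contractible algebras (with possibly non-uniform contraction rates) is contractible; this is handled by reparametrizing each summand's homotopy, or more cleanly by again using the cone-section formulation, which assembles coordinatewise without any uniformity issue.
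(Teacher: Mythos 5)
Your proposal is correct and takes essentially the same route as the paper: items (1)--(3) are the paper's arguments almost verbatim (reduce to inductive limits of contractibles resp.\ projectives via \ref{prop:S04:TFAE-trivSh} and invoke \ref{prop:S03:AAC_implies_AC} for (2) and \ref{prop:S03:AC-like_implies_AC} for (3)), and your treatment of (4) via a cone-section $A_k\to \cone{A_k}$ tensored with $\id_B$ is only a cosmetic variant of the paper's, which writes $A$ as a limit of cones using condition \myRef{prop:S04:TFAE-trivSh:condf} and computes $\cone{A_k}\otimesMax B\cong \cone{(A_k\otimesMax B)}$ directly --- both rest on the same two facts, that $\otimesMax$ commutes with inductive limits and with the tensor factor $C_0((0,1])$.
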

\begin{proof}
(1).
Let $(A_k)_{k\in\NN}$ be a family of \ca{s} with trivial shape.
By statement (7) of \autoref{prop:S04:TFAE-trivSh}, each $A_k$ is an inductive limit of contractible \ca{s}.
Note that countable direct sums of contractible \ca{s} are again contractible.
Hence, $\bigoplus_kA_k$ is an inductive limit of contractible \ca{s} and thus has trivial shape by \autoref{prop:S04:TFAE-trivSh}.

(2).
Assume that $A\cong\varinjlim A_k$ with each $A_k$ having trivial shape.
By \autoref{prop:S04:TFAE-trivSh}, each $A_k$ is an inductive limit of projective \ca{s}.
From \autoref{prop:S03:AAC_implies_AC} we deduce that $A$ is an inductive limit of projective \ca{s}, and so it has trivial shape using \autoref{prop:S04:TFAE-trivSh} again.

(3).
Assume that a \ca{} $A$ is approximated by sub-\ca{s} $A_i\subseteq A$ that have trivial shape.
By \autoref{prop:S04:TFAE-trivSh}, each $A_i$ is an inductive limit of projective \ca{s}.
This means that $A$ is $A\mathcal{P}$-like for the class $\mathcal{P}$ of projective \ca{s}.
It follows from \autoref{prop:S03:AC-like_implies_AC} that $A$ is an $A\mathcal{P}$-algebra, that is, an inductive limit of projective \ca{s}, and so $A$ has trivial shape by \autoref{prop:S04:TFAE-trivSh}.

(4).
Let $A$ be a \ca{} with trivial shape, and let $B$ be any (separable) \ca{}.
By statement (6) of \autoref{prop:S04:TFAE-trivSh}, we can write $A$ as an inductive limit of cones $\cone{A_k}=C_0((0,1])\otimes A_k$.
As noted by Blackadar, \cite[II.9.6.5, p.188]{Bla06OpAlgs}, maximal tensor products commute with arbitrary inductive limits (while minimal tensor products only commute with inductive limits with injective connecting morphisms).
Thus, $A\otimesMax B$ is an inductive limit of the cones $\cone{A_k}\otimesMax B=C_0((0,1])\otimes A_k\otimesMax B=\cone{(A_k\otimesMax B)}$.
Using statement (6) of \autoref{prop:S04:TFAE-trivSh} again, we deduce that $A\otimesMax B$ has trivial shape.
\end{proof}

%==========================================================================================
Using the notation from \autoref{pargr:S03:Approximation} and \autoref{defn:S03:P-like}, the following corollary says that a contractible-like \ca{} has trivial shape and is approximately contractible.

%==========================================================================================
\begin{cor}
\label{prop:S04:contr-like_implies_trivSh}
Let $A$ be a separable \ca{} that is approximated by contractible sub-\ca{s}.
Then $A$ has trivial shape.
Moreover, $A$ is an inductive limit of contractible \ca{s}.
\end{cor}

%==========================================================================================
\begin{prp}
\label{prop:S04:change_connecting_mor_surj}
Let $(A_k,\gamma_k)$ be an inductive system of separable \ca{s}.
Then there exists an inductive system $(B_k,\delta_k)$ with surjective connecting morphisms and such that $\varinjlim A_k\cong\varinjlim B_k$.
Moreover, we may assume that $B_k=A_k\ast\mathcal{F}_\infty$ (the free product), where
\begin{align*}
\mathcal{F}_\infty:=C^\ast(x_1,x_2,\ldots : \|x_i\|\leq 1)
\end{align*}
is the universal \ca{} generated by a countable number of contractive generators.
If $A_k$ is (semi-)projective, then so is $A_k\ast \mathcal{F}_\infty$.
\end{prp}
\begin{proof}
Since the algebras $A_k$ are separable, there exists a surjective morphism $\varphi_k\colon\mathcal{F}_\infty\to A_k$ for each $k$.
Consider the universal \ca{}
\[
\mathcal{G}:=C^\ast(x_{i,j} : i,j\in\NN, \|x_{i,j}\|\leq 1).
\]
The only difference between $\mathcal{G}$ and $\mathcal{F}_\infty$ is in the enumeration of generators, and therefore $\mathcal{G}\cong\mathcal{F}_\infty$.

Set $B_k:=A_k\ast\mathcal{G}$ and define a morphism $\psi_k\colon\mathcal{G}\to B_{k+1}$ via $\psi_k(x_{1,j}):=\varphi_{k+1}(x_j)$, and $\psi_k(x_{i,j}):=x_{i-1,j}$ if $i\geq 2$.
Define a morphism $\delta_k\colon B_k\to B_{k+1}$ as $\delta_k:=\gamma_k\ast\psi_k$.
It is easy to check that $\delta_k$ is surjective.
For each $i$, the elements $x_{i,1},x_{i,2},\ldots\in\mathcal{G}$ generate a copy of $\mathcal{F}_\infty$.
In this way, we may think of $\mathcal{G}$ as a countable free product of copies of $\mathcal{F}_\infty$.
Then, the map $\delta_k$ looks as follows:
\begin{center}
\makebox{
\xymatrix{
B_k \ar@{}[r]|{:=} \ar[d]_{\delta_k}
    & A_k \ar@{}[r]|{\ast} \ar[d]_{\gamma_k}
    & \mathcal{F}_\infty \ar@{}[r]|{\ast} \ar[dl]_{\varphi_k}
    & \mathcal{F}_\infty \ar@{}[r]|{\ast} \ar[dl]_{\cong}
    & \mathcal{F}_\infty \ar@{}[r]|{\ast} \ar[dl]_{\cong}
    & \ldots \\
B_{k+1} \ar@{}[r]|{:=}
    & A_{k+1} \ar@{}[r]|{\ast}
    & \mathcal{F}_\infty \ar@{}[r]|{\ast}
    & \mathcal{F}_\infty \ar@{}[r]|{\ast}
    & \mathcal{F}_\infty \ar@{}[r]|{\ast}
    & \ldots \\
}}
\end{center}

The natural inclusions $\iota_k\colon A_k\to B_k$ intertwine the connecting morphisms $\gamma_k$ and $\delta_k$, that is, $\delta_k\circ\iota_k=\iota_{k+1}\circ\gamma_k$.
Thus, the morphisms $\iota_k$ define a natural morphism $\iota\colon A=\varinjlim A_k\to B:=\varinjlim B_k$.
Since each $\iota_k$ is injective, so is $\iota$.

To show that $\iota$ is surjective, let $b\in B$ and $\varepsilon>0$.
We need to find $a\in A$ with $b=_\varepsilon\iota(a)$.
First, we choose and index $k$ and $b'\in B_k$ such that $\delta_{\infty,k}(b')=_{\varepsilon/2}b$.
By definition, $B_k=A_k\ast\mathcal{G}$.
This implies that every element of $B_k$ can be approximated by finite polynomials involving the elements of $A$ and the generators $x_{i,j}$.
Actually, we only need that $b'$ is approximated up to $\varepsilon/2$ by an element $b''$ in the sub-\ca{} $A_k\ast C^\ast(x_{i,j} : i\in\{1,2,\ldots,l\},j\in\NN, \|x_{i,j}\|\leq 1)$.
Note that $\delta_{k+l,k}(b'')$ lies in the image of $\iota_{k+l}$, say $\delta_{k+l,k}(b'')=\iota_{k+l}(x)$ for $x\in A_{k+l}$.
Then $a=\gamma_{\infty,k+l}(x)\in A$ satisfies $b=_\varepsilon\iota(a)$, which completes the proof of surjectivity.

Note that $\mathcal{F}_\infty$ is projective.
It follows from \cite[Proposition~2.6, 2.31]{Bla85ShapeThy} that $A_k\ast \mathcal{F}_\infty$ is \mbox{(semi-)}\-projective whenever $A_k$ is.
\end{proof}

%==========================================================================================
\begin{cor}
\label{prop:S04:trivSh_surj_limit_proj}
If a separable \ca{} has trivial shape, then it is an inductive limit of projective \ca{} with surjective connecting morphisms.
\end{cor}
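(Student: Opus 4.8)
The plan is to simply combine the inductive‑limit decomposition obtained in \ref{prop:S04:TFAE-trivSh} with the surjectivication procedure of \ref{prop:S04:change_connecting_mor_surj}. First I would invoke \ref{prop:S04:TFAE-trivSh}, specifically condition \myRef{prop:S04:TFAE-trivSh:conde}, to write the given \Cs{} $A$ as an inductive limit $A\cong\varinjlim_k A_k$ with each $A_k$ a (finitely generated) projective \Cs{}. At this stage the connecting morphisms $\gamma_k\colon A_k\to A_{k+1}$ need not be surjective, so this is not yet the desired conclusion.

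Next I would apply \ref{prop:S04:change_connecting_mor_surj} to the inductive system $(A_k,\gamma_k)$. This produces a new inductive system $(B_k,\delta_k)$ with surjective connecting morphisms and with $\varinjlim_k B_k\cong\varinjlim_k A_k\cong A$, where moreover $B_k=A_k\ast\mathcal{F}_\infty$. The only remaining point to check is that each $B_k$ is again projective; but this is exactly the last assertion of \ref{prop:S04:change_connecting_mor_surj}: since $\mathcal{F}_\infty$ is projective, the free product $A_k\ast\mathcal{F}_\infty$ is projective whenever $A_k$ is. Hence $(B_k,\delta_k)$ exhibits $A$ as an inductive limit of projective \Cs{s} with surjective connecting morphisms, which is what we want.

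There is essentially no obstacle here; the entire content has already been packaged into the two cited results, so the argument is a two‑line deduction. The only mild subtlety worth flagging is that one must start from the form of the decomposition in \ref{prop:S04:TFAE-trivSh} in which the \emph{algebras} $A_k$ (and not merely the connecting morphisms, as in \myRef{prop:S04:TFAE-trivSh:condc}) are projective, since \ref{prop:S04:change_connecting_mor_surj} preserves projectivity of the algebras $A_k$ but would not, by itself, upgrade projective connecting maps to projective algebras.
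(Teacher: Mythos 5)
Your proof is correct and is exactly the intended argument: the paper states this as an immediate corollary of combining \ref{prop:S04:TFAE-trivSh}\myRef{prop:S04:TFAE-trivSh:conde} with \ref{prop:S04:change_connecting_mor_surj}, including the observation that $A_k\ast\mathcal{F}_\infty$ remains projective. Your closing remark about starting from projective \emph{algebras} rather than projective connecting morphisms is a sensible point of care, though not an issue the paper needed to address.
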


%==========================================================================================
\begin{rmk}
\label{pargr:S04:exmpl_Dadarlat}
Dadarlat, \cite{Dad14StblyContrNotContr}, gives an example of a pointed, compact, Hausdorff space $X$ such that the commutative \ca{} $A:=C_0(X)$ is stably contractible (that is, $A\otimes\KK$ is contractible, and in particular has trivial shape), while $A$ is not contractible.
In fact, $X$ is a two-dimensional CW-complex with non-trivial fundamental group, so that $(X,x_0)$ does not have trivial shape (in the pointed, commutative category).
It follows from \cite[Proposition~2.9]{Bla85ShapeThy} that $A$ also does not have trivial shape (as a \ca{}).

Thus, while $A\otimes\KK$ has trivial shape, the full hereditary sub-\ca{} $A\subseteq A\otimes\KK$ does not.
This shows that trivial shape does not pass to full hereditary sub-\ca{s}.
From this we may deduce the following result.
\end{rmk}

%==========================================================================================
\begin{prp}
\label{prop:S04:Projectivity_not_hereditary}
Projectivity does not pass to full hereditary sub-\ca{s}.
\end{prp}
\begin{proof}
Let $A$ be Dadarlat's example of a \ca{} with $A\otimes\KK\simeq 0$ while $A\nsim_{Sh}0$, see \cite{Dad14StblyContrNotContr} and \autoref{pargr:S04:exmpl_Dadarlat}.
By \autoref{prop:S04:trivSh_surj_limit_proj}, $A\otimes\KK$ is an inductive limit of projective \ca{} $P_k$ with surjective connecting morphisms $\gamma_k\colon P_k\to P_{k+1}$.
Consider the pre-images $Q_k:=\gamma_{\infty,k}^{-1}(A)\subseteq P_k$.
Since $A\subseteq A\otimes\KK$ is a full hereditary sub-\ca{}, so is $Q_k\subseteq P_k$.

Note that $A\cong\varinjlim Q_k$.
If all algebras $Q_k$ are projective, then $A$ has trivial shape by \autoref{prop:S04:TFAE-trivSh}.
Since this is not the case, some algebras $Q_k$ are not projective.
\end{proof}

%==========================================================================================
\begin{rmk}
It was shown by Eilers and Katsura, \cite[Example~7.9]{EilKat17SemiprojGraphCa}, that also semiprojectivity does not pass to full hereditary sub-\ca{s}.
\end{rmk}

%==========================================================================================
%==========================================================================================
\section{Relations among the classes of (weakly) \mbox{(semi-)}\-projective \texorpdfstring{$C^*$-algebras}{C*-algebras}}
\label{sect:S05:relations_classes}

%==========================================================================================
In this section we will study the relation among the four classes of (weakly) semiprojective \ca{s} and (weakly) projective \ca{s}.
As it turns out, the situation is completely analogous to the commutative setting.

%==========================================================================================
\begin{lma}
\label{prop:S05:lma:SP_htpyDom_by_P_implies_P}
Let $A$ be a \ca{}, let $P$ be a projective \ca{}, and let $\alpha\colon A\to P$ and $\beta\colon P\to A$ be morphisms with $\beta\circ\alpha=\id_A$.
Then $A$ is projective.
\end{lma}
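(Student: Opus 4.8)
The plan is to verify directly that the identity morphism $\id_A$ satisfies the lifting property that defines projectivity (see \ref{pargr:S02:wSP}), by feeding the test data into the projectivity of $P$ and then using the splitting relation $\beta\circ\alpha=\id_A$ to transport the resulting lift back to $A$. Concretely, suppose we are given a \Cs{} $C$, an ideal $J\lhd C$ with quotient morphism $\pi\colon C\to C/J$, and a morphism $\sigma\colon A\to C/J$; the task is to produce a morphism $\psi\colon A\to C$ with $\pi\circ\psi=\sigma$.

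First I would push $\sigma$ up along $\beta$, considering $\sigma\circ\beta\colon P\to C/J$. Since $P$ is projective, there exists a morphism $\widetilde{\psi}\colon P\to C$ with $\pi\circ\widetilde{\psi}=\sigma\circ\beta$. Then I would pull this lift back down to $A$ by setting $\psi:=\widetilde{\psi}\circ\alpha\colon A\to C$. Composing with $\pi$ and invoking $\beta\circ\alpha=\id_A$ yields $\pi\circ\psi=\pi\circ\widetilde{\psi}\circ\alpha=\sigma\circ\beta\circ\alpha=\sigma$, which is precisely the factorization required of a projective morphism. Hence $\id_A$ is projective, i.e.\ $A$ is projective.

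There is essentially no obstacle here: this is the standard ``a retract of a projective object is projective'' diagram chase, and the only ingredients are the definition of projectivity and the hypothesis $\beta\circ\alpha=\id_A$, with $\beta$ used solely to carry $\sigma$ onto $P$ and $\alpha$ used solely to restrict the lift back along the splitting. I would also remark that the argument adapts verbatim to the weak setting: given in addition a finite set $F\subset A$ and $\varepsilon>0$, one applies weak projectivity of $P$ to the finite set $\alpha(F)\subset P$ (and the same $\varepsilon$) to obtain $\widetilde{\psi}$ with $\pi\circ\widetilde{\psi}=_\varepsilon^{\alpha(F)}\sigma\circ\beta$, whence $\psi:=\widetilde{\psi}\circ\alpha$ satisfies $\pi\circ\psi=_\varepsilon^F\sigma$, so the analogous statement holds with ``projective'' replaced by ``weakly projective'' throughout.
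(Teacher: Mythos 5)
Your argument is correct and is exactly the paper's proof: lift $\sigma\circ\beta$ through the projectivity of $P$ and compose the lift with $\alpha$, using $\beta\circ\alpha=\id_A$ to recover $\sigma$. The added observation that the same diagram chase works in the weakly projective setting is also valid, though not needed for this lemma.
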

\begin{proof} %\blank
Let $B$ be any \ca{}, let $J\lhd B$ be an ideal, and let $\varphi\colon A\to B/J$ be a morphism.
We need to find a lift $\psi\colon A\to B$.
The situation and the maps to be constructed are shown in the following commutative diagram.
\begin{center}
\makebox{
\xymatrix{
    & & & B \ar@{->>}[d]^{\pi} \\
    A \ar[r]_{\alpha} \ar@/_1.5pc/[rr]_{\id_A}
        & P \ar[r]_{\beta} \ar@{..>}[urr]^{\omega}
        & A \ar[r]_{\varphi}
        & B/J \\
}}
\end{center}
Since $P$ is projective, there exists a morphism $\omega\colon P\to B$ that lifts $\varphi\circ\beta\colon P\to B/J$, that is, such that $\pi\circ\omega=\varphi\circ\beta$.
Set $\psi:=\omega\circ\alpha\colon A\to B$.
Then $\pi\circ\psi=\pi\circ\omega\circ\alpha=\varphi\circ\beta\circ\alpha=\varphi\circ\id_A$.
\end{proof}

%==========================================================================================
\begin{thm}
\label{prop:S05:SP-C_implies_P}
Let $A$ be a separable, semiprojective \ca{} of trivial shape.
Then $A$ is projective.
\end{thm}
\begin{proof} %\blank
By \autoref{prop:S04:trivSh_surj_limit_proj}, $A$ is an inductive limit of projective \ca{s} $P_k$ with surjective connecting morphisms $\gamma_k\colon P_k\to P_{k+1}$.
The semiprojectivity of $A$ gives an index $k$ and a lift $\alpha\colon A\to P_k$ such that $\gamma_{\infty,k}\circ\alpha=\id_A$.
It follows from \autoref{prop:S05:lma:SP_htpyDom_by_P_implies_P} that $A$ is projective.
\end{proof}

%==========================================================================================
\begin{cor}
\label{prop:S05:lma:SP_trivSh_implies_C}
Let $A$ be a separable, semiprojective \ca{} of trivial shape.
Then $A$ is contractible.
\end{cor}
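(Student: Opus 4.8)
The statement to prove is Corollary~\ref{prop:S05:lma:SP_trivSh_implies_C}: a semiprojective \Cs{} $A$ of trivial shape is contractible. The remark preceding the corollary already tells us the strategy — combine the immediately-preceding Theorem~\ref{prop:S05:SP-C_implies_P} with the standard fact that projective \Cs{s} are contractible. So the proof is essentially a one-liner, and the only ``work'' is to make sure the second ingredient is available.

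\begin{proof}
    By Theorem~\ref{prop:S05:SP-C_implies_P}, the \Cs{} $A$ is projective.
    It is a standard fact that every projective \Cs{} is contractible: if $\id_A$ is projective, then applying the lifting property to the quotient morphism $\ev_1\colon \cone{A}\to A$ yields a morphism $\psi\colon A\to \cone{A}$ with $\ev_1\circ\psi=\id_A$, so that $A$ is a retract of the contractible \Cs{} $\cone{A}$, hence contractible.
    (Alternatively, this is the special case $\varphi=\id_A$ of Lemma~\ref{prop:S04:lma:proj_map_null-homotopic}, which gives $\id_A\simeq 0$.)
\end{proof}

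\noindent
I do not foresee any genuine obstacle here: Theorem~\ref{prop:S05:SP-C_implies_P} does all the real work, and ``projective implies contractible'' is a well-known elementary argument (indeed it already appears inside the proof of Lemma~\ref{prop:S04:lma:proj_map_null-homotopic} in the excerpt). The only point requiring a moment's care is to phrase the retract argument correctly — namely that $\ev_1\colon \cone A\to A$ is a surjection, so projectivity of $A$ applies to it, and that a retract of a contractible \Cs{} is contractible — but both are routine. An equally valid route, if one prefers to avoid reproving ``projective $\Rightarrow$ contractible'', is simply to cite Lemma~\ref{prop:S04:lma:proj_map_null-homotopic} with $\varphi=\id_A$.
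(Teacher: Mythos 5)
Your proof is correct and is exactly the paper's route: the paper states this corollary as an immediate consequence of Theorem~\ref{prop:S05:SP-C_implies_P} together with the standard fact that projective \Cs{s} are contractible (which, as you note, is the case $\varphi=\id_A$ of Lemma~\ref{prop:S04:lma:proj_map_null-homotopic}). Nothing further is needed.
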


%==========================================================================================
Loring, \cite[Lemma~5.5]{Lor12WklyProj}, shows that for a weakly projective \ca{} $A$ and a semi\-projective \ca{} $D$ the set $[D,A]$ of homotopy classes of morphisms from $D$ to $A$ is trivial.
A variant of this proof shows statement (2) in \autoref{prop:S04:TFAE-trivSh}, so that we get the following:

%==========================================================================================
\begin{prp}%[{Loring, \cite[Lemma~5.5]{Lor12WklyProj}}]
\label{prop:S05:Lor:wP_implies_trivSh}
Every separable, weakly projective \ca{} has trivial shape.
\end{prp}

%==========================================================================================
\autoref{prop:S05:Lor:wP_implies_trivSh} shows that a weakly projective \ca{} is weakly semiprojective and has trivial shape.
We will now show that the converse is also true.

%==========================================================================================
\begin{thm}
\label{prop:S05:wSP-trivSh_implies_wP}
Let $A$ be a separable, weakly semiprojective \ca{} of trivial shape.
Then $A$ is weakly projective.
\end{thm}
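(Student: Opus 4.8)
The strategy mirrors the proof of Theorem~\ref{prop:S05:SP-C_implies_P}, but everything must be done ``up to $\varepsilon$ on finite sets'' because we only have weak lifting properties available. Let $C$ be any \Cs{}, $J\lhd C$ an ideal, $\sigma\colon A\to C/J$ a morphism, $\varepsilon>0$ and a finite set $F\subset A$. We must produce a morphism $\psi\colon A\to C$ with $\pi\circ\psi=_\varepsilon^F\sigma$, where $\pi\colon C\to C/J$ is the quotient map.

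First I would use that $A$ has trivial shape to write, by Theorem~\ref{prop:S04:TFAE-trivSh}, $A\cong\varinjlim(P_k,\gamma_k)$ with each $P_k$ projective; by Corollary~\ref{prop:S04:trivSh_surj_limit_proj} (or Proposition~\ref{prop:S04:change_connecting_mor_surj}) we may take the connecting morphisms $\gamma_k\colon P_k\to P_{k+1}$ surjective. Since $A$ is weakly semiprojective, Proposition~\ref{prop:S02:WSP_lifting_limits} (applied with $\varphi=\id_A$, to the inductive system $(P_k,\gamma_k)$ whose limit is $A$) gives an index $k$ and a morphism $\alpha\colon A\to P_k$ with $\gamma_{\infty,k}\circ\alpha=_\varepsilon^F\id_A$. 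Now compose: $\sigma\circ\gamma_{\infty,k}\colon P_k\to C/J$ is a morphism into a quotient, and $P_k$ is projective, so there is a genuine lift $\omega\colon P_k\to C$ with $\pi\circ\omega=\sigma\circ\gamma_{\infty,k}$. Set $\psi:=\omega\circ\alpha\colon A\to C$. Then for $x\in F$,
\[
    \pi(\psi(x))=\omega^{\pi}(\alpha(x))=\sigma(\gamma_{\infty,k}(\alpha(x))),
\]
and since $\gamma_{\infty,k}\circ\alpha=_\varepsilon^F\id_A$ and $\sigma$ is a contraction, $\|\pi(\psi(x))-\sigma(x)\|<\varepsilon$ for all $x\in F$. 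Hence $\pi\circ\psi=_\varepsilon^F\sigma$, which is exactly what weak projectivity of $A$ demands.

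The only point requiring a little care is that $\sigma$ and the connecting maps are $\ast$-homomorphisms, hence norm-nonincreasing, so the $\varepsilon$-estimate propagates without loss; no amplification of the tolerance occurs because the projective lift $\omega$ is exact, not approximate. Thus there is no real obstacle here: unlike the proof of Theorem~\ref{prop:S05:SP-C_implies_P}, where one needed a genuine section $\alpha$ and then invoked Lemma~\ref{prop:S05:lma:SP_htpyDom_by_P_implies_P}, in the weak setting the approximate section from Proposition~\ref{prop:S02:WSP_lifting_limits} suffices precisely because we are only asked for an approximate lift of $\sigma$ at the end. The slight subtlety to flag is that one should run the above with a single pair $(\varepsilon,F)$ at a time (as is built into the definition of weak projectivity in~\ref{pargr:S02:wSP}), so no diagonal or limiting argument is needed.
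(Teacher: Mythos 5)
Your proof is correct and is essentially identical to the paper's own argument: both use Theorem~\ref{prop:S04:TFAE-trivSh} to write $A\cong\varinjlim P_k$ with $P_k$ projective, apply Proposition~\ref{prop:S02:WSP_lifting_limits} to $\id_A$ to get an approximate section $\alpha\colon A\to P_k$, lift $\sigma\circ\gamma_{\infty,k}$ exactly using projectivity of $P_k$, and compose. (The surjectivity of the connecting morphisms that you invoke is not needed, and the expression $\omega^{\pi}(\alpha(x))$ in your display is a typo for $\pi(\omega(\alpha(x)))$, but neither affects the argument.)
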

\begin{proof}
Let $B$ be a \ca{}, let $J\lhd B$ be an ideal, and let $\pi\colon B\to B/J$ be the quotient morphism.
Further, let $\varphi\colon A\to B/J$ be a morphism, let $\varepsilon>0$, and let $F\ssubset A$.
We need to find a lift $\psi\colon A\to B$ such that $\pi\circ\psi=_\varepsilon^F\varphi$.
The situation and the maps to be constructed are shown in the following diagram.
\begin{center}
\makebox{
\xymatrix{
& & B \ar[d]^{\pi}
\\
P_k \ar[r]_{\gamma_{\infty,k}} \ar@{..>}[urr]^{\beta}
& A \ar@{..>}[ur]_{\psi} \ar[r]_{\varphi} \ar@/^1.5pc/[l]^{\alpha}
& B/J
}}
\end{center}

From \autoref{prop:S04:TFAE-trivSh} we get an inductive system $(P_k,\gamma_k)$ of projective \ca{s} $P_k$ with inductive limit $A$.
Considering the identity morphism $\id_A\colon A\to A\cong\varinjlim P_k$ we get from \autoref{prop:S02:WSP_lifting_limits} an index $k$ and a morphism $\alpha\colon A\to P_k$ such that $\gamma_{\infty,k}\circ\alpha=_\varepsilon^F\id_A$.
Consider the morphism $\varphi\circ\gamma_{\infty,k}\colon P_k\to B/J$.
The projectivity of $P_k$ gives us a lift $\beta\colon P_k\to B$ such that $\pi\circ\beta=\varphi\circ\gamma_{\infty,k}$.

Set $\psi:=\beta\circ\alpha$.
Then $\pi\circ\psi=\pi\circ\beta\circ\alpha=\varphi\circ\gamma_{\infty,k}\circ\alpha=_\varepsilon^F\varphi$, as desired.
\end{proof}

%==========================================================================================
We summarize the results as follows:

%==========================================================================================
\begin{thm}
\label{prop:S05:wP_iff_wSP_trivSh}
Let $A$ be a separable \ca{}.
Then the following are equivalent:
\begin{enumerate}
\item
$A$ is (weakly) projective.
\item
$A$ is (weakly) semiprojective and has trivial shape.
\end{enumerate}
\end{thm}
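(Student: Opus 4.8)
The plan is to prove the theorem by combining the four prior results that together establish both directions and both the "weak" and the "non-weak" versions simultaneously. The statement is really two theorems packaged as one (reading the parenthetical "(weakly)" consistently throughout), so I would dispatch each version using the appropriate pair of lemmas.

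First I would handle the non-weak case. For \textbf{(a) $\Rightarrow$ (b)}: if $A$ is projective, then it is in particular weakly projective, so it has trivial shape by \ref{prop:S05:Lor:wP_implies_trivSh}; and projective implies semiprojective by a standard argument (a projective morphism is semiprojective — one lifts into $C/J_k$ for any $k$), so $A$ is semiprojective with trivial shape. For \textbf{(b) $\Rightarrow$ (a)}: this is exactly \ref{prop:S05:SP-C_implies_P}, which says a semiprojective \Cs{} of trivial shape is projective.

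Then I would handle the weak case. For \textbf{(a) $\Rightarrow$ (b)}: a weakly projective \Cs{} has trivial shape by \ref{prop:S05:Lor:wP_implies_trivSh}, and weakly projective implies weakly semiprojective (again a direct check from the lifting diagrams, or cite the basic permanence properties from \cite{Bla1985}), so $A$ is weakly semiprojective of trivial shape. For \textbf{(b) $\Rightarrow$ (a)}: this is precisely \ref{prop:S05:wSP-trivSh_implies_wP}. Thus in all four implications the content has already been established, and the theorem is just the summary.

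I do not expect a genuine obstacle here, since the theorem is explicitly a recollection of earlier results; the only mild point of care is making sure the implications "projective $\Rightarrow$ semiprojective" and "weakly projective $\Rightarrow$ weakly semiprojective" are invoked — these are elementary (lift through the single quotient $C/\overline{\bigcup_k J_k}$ and land in some $C/J_k$, trivially, or even in $C$ itself), and they are recorded in \cite[\S2]{Bla1985}. So the proof is essentially: "Combine \ref{prop:S04:TFAE-trivSh}, \ref{prop:S05:SP-C_implies_P}, \ref{prop:S05:Lor:wP_implies_trivSh}, and \ref{prop:S05:wSP-trivSh_implies_wP}, together with the elementary fact that (weak) projectivity implies (weak) semiprojectivity."
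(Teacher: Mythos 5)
Your proposal is correct and matches the paper's intent exactly: the theorem is stated as a summary ("We summarize the results as follows"), and its proof is precisely the combination of \ref{prop:S05:Lor:wP_implies_trivSh}, \ref{prop:S05:wSP-trivSh_implies_wP}, \ref{prop:S05:SP-C_implies_P}, together with the elementary implications that (weak) projectivity implies (weak) semiprojectivity. Nothing further is needed.
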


%==========================================================================================
The following conclusion confirms a conjecture of Loring.

%==========================================================================================
\begin{cor}
\label{prop:S05:P_iff_SP-C}
Let $A$ be a separable \ca{}.
Then the following are equivalent:
\begin{enumerate}
\item
$A$ is projective.
\item
$A$ is semiprojective and weakly projective.
\item
$A$ is semiprojective and contractible.
\item
$A$ is semiprojective and has trivial shape.
\end{enumerate}
\end{cor}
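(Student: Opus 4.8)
The plan is to establish the cycle of implications \textit{(a)} $\Rightarrow$ \textit{(b)} $\Rightarrow$ \textit{(d)} $\Rightarrow$ \textit{(a)}, together with \textit{(a)} $\Rightarrow$ \textit{(c)} $\Rightarrow$ \textit{(d)}, so that all four conditions land in a single equivalence class. No new ideas are needed: the corollary is essentially a bookkeeping exercise assembling \ref{prop:S05:SP-C_implies_P} and \ref{prop:S05:Lor:wP_implies_trivSh} with a couple of routine facts about projectivity. The only substantive input is the implication ''semiprojective plus trivial shape $\Rightarrow$ projective'', and that is already available as \ref{prop:S05:SP-C_implies_P}, so there is no genuine obstacle; the main care needed is to verify the elementary step ''projective $\Rightarrow$ semiprojective'' cleanly.

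First I would check \textit{(a)} $\Rightarrow$ \textit{(b)}. A projective \Cs{} is weakly projective, since weak projectivity only demands an approximate lift on a finite set, which is implied by the existence of an exact lift. It is also semiprojective: given a \Cs{} $C$, an increasing sequence of ideals $J_1\lhd J_2\lhd\ldots$, and a morphism $\sigma\colon A\to C/\overline{\bigcup_k J_k}$, projectivity of $A$ produces a lift $\psi\colon A\to C$ of $\sigma$ along the quotient map $C\to C/\overline{\bigcup_k J_k}$; composing with $C\to C/J_1$ yields the required lift into $C/J_k$ with $k=1$. Hence \textit{(a)} implies \textit{(b)}. Next, \textit{(b)} $\Rightarrow$ \textit{(d)} is immediate: a weakly projective \Cs{} has trivial shape by \ref{prop:S05:Lor:wP_implies_trivSh}, and it is semiprojective by hypothesis. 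The implication \textit{(d)} $\Rightarrow$ \textit{(a)} is exactly \ref{prop:S05:SP-C_implies_P}. Together with \textit{(a)} $\Rightarrow$ \textit{(b)} this closes the loop, so \textit{(a)}, \textit{(b)}, \textit{(d)} are equivalent.

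Finally I would bring in \textit{(c)}. For \textit{(a)} $\Rightarrow$ \textit{(c)}: a projective \Cs{} is semiprojective (shown above), and applying \ref{prop:S04:lma:proj_map_null-homotopic} to the projective morphism $\id_A$ gives $\id_A\simeq 0$, i.e.\ $A$ is contractible. For \textit{(c)} $\Rightarrow$ \textit{(d)}: a contractible \Cs{} is homotopy equivalent to $0$, hence shape equivalent to $0$ (homotopy equivalence implies shape equivalence, see \ref{pargr:S02:shape_system}), so it has trivial shape; combined with semiprojectivity this is \textit{(d)}. Thus \textit{(c)} joins the equivalence class and the proof is complete. I expect the step most likely to invite a slip — though it is still routine — to be the verification of ''projective $\Rightarrow$ semiprojective'', since one must be careful to produce a lift into $C/J_k$ rather than merely into $C$; everything else is a direct citation of earlier results.
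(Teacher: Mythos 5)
Your proof is correct and assembles the corollary from the same ingredients the paper uses: \ref{prop:S05:SP-C_implies_P} for (d) $\Rightarrow$ (a), \ref{prop:S05:Lor:wP_implies_trivSh} for the trivial-shape part of (b) $\Rightarrow$ (d), \ref{prop:S04:lma:proj_map_null-homotopic} for contractibility, and the standard observations that projectivity implies semiprojectivity and weak projectivity. The verification that a lift into $C$ composed with $C\to C/J_1$ witnesses semiprojectivity is exactly right, so nothing further is needed.
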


%==========================================================================================
\begin{pgr}
We note that the results in this section, in particular \autoref{prop:S05:wP_iff_wSP_trivSh}, are in exact analogy with results in shape theory for spaces, as shown in the table below.

A (weakly) projective \ca{} is the non-commutative analog of an (approximate) absolute retract, and a (weakly) semiprojective \ca{} is the non-commutative analog of an (approximate) absolute neighborhood retract.
The analogies are shown in the table below.
We refer the reader to \cite[2.1, 2.2, 2.3]{SoeThi12CharCommutSP} and the references therein for definitions and further discussion.

We have the following analogy of notions:
\\

\newlength{\bulletIdent}
\settowidth{\bulletIdent}{$\bullet\:$}

%==========================================================================================
\noindent
\begin{tabular}{p{.5\textwidth}|p{.4\textwidth}}
commutative world
& noncommutative world \\
(for a compact, metric space $X$):
& (for a separable \ca{} $A$): \\[0.15cm]
\hline\\%[0.1cm]
%    \noalign{\smallskip}
%    \multicolumn{2}{c}{\makebox[1cm]{} analogy of notions}
%    \\[0.2cm]
$\bullet\:$ $X$ is an absolute retract (AR)
&  $\bullet\:$ $A$ is projective (P) \\[0.2cm]
$\bullet\:$ $X$ is an approximate absolute retract (AAR)
&  $\bullet\:$ $A$ is weakly projective (WP) \\[0.2cm]
$\bullet\:$ $X$ is an absolute neighborhood retract (ANR)
&  $\bullet\:$ $A$ is semiprojective (SP) \\[0.2cm]
$\bullet\:$ $X$ is an approximative absolute neigh\-bor\-hood retract (AANR)
&  $\bullet\:$ $A$ is weakly semiprojective (WSP) \\[0.2cm]
\end{tabular}
\\

%==========================================================================================
We have the following analogy of results:
\\

%==========================================================================================
\noindent
\begin{tabular}{p{.5\textwidth}|p{.4\textwidth}}
commutative world
& noncommutative world \\
(for a compact, metric space $X$):
& (for a separable \ca{} $A$): \\[0.15cm]
\hline\\[0.1cm]
%    \noalign{\smallskip}
%    \multicolumn{2}{c}{\makebox[1cm]{} analogy of results}
%    \\[0.2cm]
%    \makebox[\bulletIdent]{}

$\bullet\:$ $X$ is AR $\:\:\Leftrightarrow\:\:$ $X$ is ANR and $X\simeq\pt$
& $\bullet\:$ $A$ is P $\:\:\Leftrightarrow\:\:$ $A$ is SP and $A\simeq 0$ \\
(see \cite[IV.9.1]{Bor67ThyRetracts})
& (see \autoref{prop:S05:SP-C_implies_P}) \\[0.2cm]

$\bullet\:$ $X$ is AAR $\:\:\Leftrightarrow\:\:$ $X$ is AANR and $X\sim_{Sh}\pt$ %(see \cite{Gmu71ApproxRetr} and \cite{Bog75ApproxFundRetr})
(see \cite{Gmu71ApproxRetr} and \cite{Bog75ApproxFundRetr})
& $\bullet\:$ $A$ is WP $\:\:\Leftrightarrow\:\:$ $A$ is WSP and $A\sim_{Sh}0$ \\
& (see \cite{Lor12WklyProj} and \autoref{prop:S05:wSP-trivSh_implies_wP})\\[0.2cm]

$\bullet\:$ if $X$ is ANR, then:
& $\bullet\:$ if $A$ is SP, then:  \\
$X\sim_{Sh}\pt$ $\Leftrightarrow$ $X\simeq\pt$ \: (see \cite{Bor67ThyRetracts})
& $A\sim_{Sh}0$ $\Leftrightarrow$ $A\simeq 0$ \: (see \autoref{prop:S05:lma:SP_trivSh_implies_C}) \\
\end{tabular}
\end{pgr}

%==========================================================================================
%==========================================================================================
\section{Questions}

%==========================================================================================
\begin{qst}
\label{quest:Proj_gen}
Assume that $A$ is a \ca{} with trivial shape.
Is $A$ an inductive limit, $\varinjlim A_k$, with surjective connecting morphisms of projective \ca{s} $A_k$ with $\gen(A_k)\leq\gen(A)+1$?
\end{qst}

%==========================================================================================
The result of Loring and Shulmann, \cite[Theorem~7.4]{LorShu12NCSemialgLifting}, see \autoref{prop:S04:LorShu:Cone_as_limit}, shows that \autoref{quest:Proj_gen} has a positive answer for cones.
Furthermore, it follows from \autoref{prop:S04:TFAE-trivSh} that $A$ is an inductive limit, $\varinjlim A_k$, of projective \ca{s} $A_k$ with $\gen(A_k)\leq\gen(A)+1$, but the connecting morphisms may not be surjective.
Using \autoref{prop:S04:change_connecting_mor_surj}, we can always arrange that the connecting morphisms are surjective, but the approximating algebras are replaced by $A_k\ast\mathcal{F}_\infty$, which have $\gen(A_k\ast\mathcal{F}_\infty)=\infty$.
\\

%==========================================================================================
Let us say that a \ca{} $A$ has property $(\ast)$ if $[D,A]$ is trivial for every semiprojective \ca{} $D$.
This means that for each (fixed) semiprojective $D$, all morphisms from $D$ to $A$ are homotopic.
Every \ca{} of trivial shape has property $(\ast)$.
%We ask if the converse is true:

%==========================================================================================
\begin{qst}
Assume that $A$ is a \ca{} with property $(\ast)$.
Does $A$ have trivial shape?
\end{qst}

%==========================================================================================
If $A$ is an inductive limit of semiprojective \ca{s}, then property $(\ast)$ for $A$ implies that $A$ has trivial shape.
As mentioned in \autoref{quest:S01:SSS}, see \cite[4.4]{Bla85ShapeThy}, it is however an open question whether every \ca{} is an inductive limit of semiprojective \ca{s}.

%==========================================================================================
%==========================================================================================
\section*{Acknowledgments}

%==========================================================================================
I thank Eduard Ortega and Mikael R{\o}rdam for their valuable comments, and especially for their careful reading of all the technical details.
I thank Tatiana Shulman and Leonel Robert for discussions and feedback on this paper.
I thank George Elliott for interesting discussions on approximate intertwinings.

%\bibliographystyle{../aomalphaMy}
%\bibliography{../References}

\providecommand{\bysame}{\leavevmode\hbox to3em{\hrulefill}\thinspace}
\providecommand{\noopsort}[1]{}
\providecommand{\mr}[1]{\href{http://www.ams.org/mathscinet-getitem?mr=#1}{MR~#1}}
\providecommand{\zbl}[1]{\href{http://www.zentralblatt-math.org/zmath/en/search/?q=an:#1}{Zbl~#1}}
\providecommand{\jfm}[1]{\href{http://www.emis.de/cgi-bin/JFM-item?#1}{JFM~#1}}
\providecommand{\arxiv}[1]{\href{http://www.arxiv.org/abs/#1}{arXiv~#1}}
\providecommand{\doi}[1]{\url{http://dx.doi.org/#1}}
\providecommand{\MR}{\relax\ifhmode\unskip\space\fi MR }
% \MRhref is called by the amsart/book/proc definition of \MR.
\providecommand{\MRhref}[2]{%
  \href{http://www.ams.org/mathscinet-getitem?mr=#1}{#2}
}
\providecommand{\href}[2]{#2}

\end{document}